\newtheorem{theorem}{Theorem}[section]
\newtheorem{lemma}[theorem]{Lemma}
\newtheorem{corollary}[theorem]{Corollary}
\theoremstyle{definition}
\newtheorem{example}[theorem]{Example}
\theoremstyle{remark}
\newtheorem{remark}[theorem]{Remark}
\numberwithin{equation}{section}
\begin{document}

	\setcounter{page}{1}
	\title[WEYL TYPE THEOREMS FOR HYPERCYCLIC, SUPERCYCLIC, AND TOEPLITZ OPERATORS]{ WEYL TYPE THEOREMS FOR HYPERCYCLIC, SUPERCYCLIC, AND TOEPLITZ OPERATORS}
	\author{ simi thomas\textsuperscript{1}, thankarajan prasad\textsuperscript{2}   and shery fernandez\textsuperscript{3} }

	
		\let\thefootnote\relax\footnote{\textsuperscript{1}Department of Mathematics, Cochin University of Science and Technology, Kerala, India -682022, \textbf{ simimariumthomas@cusat.ac.in}.}
	
	\let\thefootnote\relax\footnote{\textsuperscript{2}Department of Mathematics, University of Calicut,
		Kerala, India -673635, \textbf{prasadvalapil@gmail.com}.}
	\let\thefootnote\relax\footnote{\textsuperscript{3}Department of Mathematics, Cochin University of Science and Technology, Kerala, India -682022, \textbf{sheryfernandez@cusat.ac.in}.}
	
	\subjclass[2010]{47A10, 47B35, 47B38}
	
	\keywords{Weyl type theorems; property $(UW_E)$; hypercyclic/supercyclic operators; Bergman space; Toeplitz operator}
	
\begin{abstract}
 In this paper, we study property $(UW_E)$ for hypercyclic and supercyclic operators. The stability of variants of Weyl type theorems under compact perturbations for Toeplitz operators on the Bergman space is also studied. We also provide some examples of Toeplitz operators satisfying Weyl type theorems on the Bergman space and the harmonic Bergman space.
\end{abstract}\maketitle
\section{Introduction}
Hermann Weyl \cite{Weyl} made a significant observation about how the spectrum and the Weyl spectrum of bounded self-adjoint operators on separable Hilbert spaces are related. He identified that if one removes the Weyl spectrum from the spectrum, the remaining points are isolated eigenvalues with finite multiplicities. This result is called Weyl's theorem. Coburn \cite{cob} verified this for non-normal operators, including hyponormal operators. Berberian \cite{berb} extended Weyl's theorem to operators that are not necessarily normal. Later, Weyl's theorem was extended to several classes of operators (see \cite{aiena,cob,mec,qiu}). Variants of Weyl's theorem called Weyl type theorems have been introduced and studied by many authors (see \cite{aiena,berk,jia,liu,sana,yang}).
\par Property $(UW_E)$ is one of the variants of Weyl type theorems introduced by Berkani and Kachad \cite{berk}. Chenhui and Cao \cite{sun} characterized property $(UW_E)$ by using topological uniform descent. Qiu, Sinan and Cao studied property $(UW_E)$ for operators and operator matrices \cite{qiu}. In \cite{prasad}, the authors studied property $(UW_E)$ for operators and its stability under compact perturbations. \par In this paper, we study property $(UW_E)$ for hypercyclic and supercyclic operators and give some examples of hypercyclic and supercyclic operators that do not satisfy property $(UW_E)$. We also give examples of operators satisfying property $(UW_E)$ that are neither hypercyclic nor supercyclic. Moreover, we study the spectral properties of Toeplitz operators on the Bergman space that satisfy property $(UW_E)$. We show that some Weyl type theorems are invariant under compact perturbations for Toeplitz operators on the Bergman space with continuous symbols under certain conditions. We verify that there exists a non-hyponormal Toeplitz operator on the  harmonic Bergman space that satisfies property $(UW_E)$. 
\section{Preliminaries}
Let $\mathcal{H}$ be an infinite dimensional complex separable Hilbert space, $\mathcal {B(H)}$ be the algebra of all bounded linear operators on $\mathcal{H}$, and let $\mathcal {K(H)}$ denote the ideal of all compact operators on $\mathcal{H}$. The \textit{kernel}, the \textit{range}, the \textit{nullity}, and the \textit{defect} of an operator $T\in \mathcal {B(H)}$ are denoted by $\mathcal{N}(T), \mathcal{R}(T), \alpha(T)$, and $\beta(T)$, respectively. An operator $T\in \mathcal {B(H)}$ is said to be an \textit{upper semi-Fredholm} (resp., \textit{lower semi-Fredholm}) operator if $\alpha (T)< \infty $ (resp., $\beta(T)< \infty$) and $\mathcal{R}(T) $ is closed. Let $$U_{SF}(\mathcal{H})=\lbrace T \in \mathcal{B(H)}: T \text{ is upper semi-Fredholm} \rbrace$$ and $$L_{SF}(\mathcal{H})=\lbrace T \in \mathcal {B(H)}: T \text{ is lower semi-Fredholm} \rbrace $$ be the collections of all upper and lower semi-Fredholm operators, respectively. An operator $T \in \mathcal {B(H)}$ is called \textit{semi-Fredholm}, $  T \in SF(\mathcal{H}) $, if $T \in U_{SF}(\mathcal{H}) \cup L_{SF}(\mathcal{H})$. Let $F(\mathcal{H})= U_{SF}(\mathcal{H}) \cap L_{SF}(\mathcal{H})$ denote the collection of all \textit{Fredholm operators}. The \textit{index} of an operator $T \in SF(\mathcal{H})$ is given by $$ind(T)= \alpha(T)-\beta(T).$$ An operator $T \in \mathcal {B(H)}$ is \textit{Weyl} if it is Fredholm of index zero. Let $$SF_+(\mathcal{H})  = \lbrace T \in SF(\mathcal{H}): ind(T)\leq0 \rbrace$$ and $$SF_-(\mathcal{H})  = \lbrace T \in SF(\mathcal{H}): ind(T)\geq 0 \rbrace$$ be the collection of all \textit{upper semi-Weyl operators} and the collection of all \textit{lower semi-Weyl operators}, respectively. The \textit{ascent} and \textit{descent} of an operator $T\in \mathcal {B(H)}$ are given by $$\text{asc}(T)= \text{inf} \lbrace n\in \mathbb{N} \cup \lbrace\infty \rbrace :\mathcal{N}(T^n)=\mathcal{N}(T^{n+1})\rbrace$$ ~\text{and} $$\text{dsc}(T)=\text{inf} \lbrace n\in \mathbb{N} \cup \lbrace\infty \rbrace : \mathcal{R}(T^n)=\mathcal{R}(T^{n+1})\rbrace.$$ An operator $T$ is said to be \textit{Browder} if $T \in F(\mathcal{H})$ and $\text{asc}(T)=\text{dsc}(T)< \infty$. We denote the spectrum and the approximate spectrum of $T \in \mathcal{B(H)}$ as $\sigma(T)$ and $\sigma_a(T)$, respectively. The \textit{essential spectrum} $\sigma_{e}(T)$, the \textit{Wolf spectrum} $\sigma_{lre}(T)$, the \textit{Weyl spectrum} $\sigma_w(T)$, the \textit{upper semi-Weyl spectrum} $\sigma_{uw}(T)$, and the \textit{Browder spectrum} $\sigma_b(T)$ of $T \in \mathcal {B(H)}$ are given by
\begin{eqnarray*}
	\sigma_e(T) & = & \lbrace \lambda \in \mathbb{C} : T- \lambda I \notin F(\mathcal{H}) \rbrace, \\
	\sigma_{lre}(T) &=& \lbrace \lambda \in \mathbb{C} : T- \lambda I \notin SF(\mathcal{H}) \rbrace,\\
	\sigma_w(T) &=& \lbrace \lambda \in \mathbb{C}  : T-\lambda I \text{\,\,\,is not Weyl} \rbrace,\\
	\sigma_{uw}(T) &=& \lbrace \lambda \in \mathbb{C}  : T-\lambda I \notin SF_+(\mathcal{H}) \rbrace,\\
	\sigma_{b}(T) &=& \lbrace \lambda \in \mathbb{C}  : T-\lambda I \text{\,\,\,is not Browder} \rbrace.
	\end{eqnarray*}
The \textit{semi-Fredholm domain} of $T$ is denoted by $\rho_{SF}(T)= \mathbb{C} \setminus \sigma_{lre}(T)$.
\begin{eqnarray*}
	\rho_{SF}^+(T)&:=&\lbrace \lambda \in \rho_{SF}(T) : ind (T-\lambda I) >0\rbrace,\\
	\rho_{SF}^-(T)&:=&\lbrace \lambda \in \rho_{SF}(T) : ind(T-\lambda I) <0\rbrace, \\
	\rho_{SF}^0(T)&:=&\lbrace \lambda \in \rho_{SF}(T) : ind(T-\lambda I) =0\rbrace.
\end{eqnarray*}
It is evident that $\sigma_{w}(T)= \mathbb{C}\setminus \rho_{SF}^0(T)$ and $\rho_{SF}(T)=\rho_{SF}^0(T) \cup \rho_{SF}^+(T) \cup \rho_{SF}^-(T)$.  Let $\text{iso}~\sigma(T)$ denote the \textit{isolated points} of $\sigma(T)$.
We denote 
\begin{eqnarray*}
	E(T)&:=&\lbrace \lambda \in \text{iso}~\sigma(T) :\alpha (T-\lambda I) >0\rbrace~\text{and}\\
	E_0(T)&:=&\lbrace \lambda \in \text{iso}~\sigma(T) :0<\alpha (T-\lambda I) <\infty\rbrace\\
	P_{00}(T)&:=&\lbrace \lambda \in \sigma(T) : T-\lambda I \text{\,\,\,is Browder}\rbrace.
\end{eqnarray*}
The Weyl spectrum admits the following characterization in terms of spectra of compact perturbations:
$$\sigma_{w}(T)=\bigcap_{K \in \mathcal{K}(\mathcal{H})}\sigma(T+K) \quad \text{\cite{cob}}.$$
An operator $T \in \mathcal{B(H)}$ has the \emph{single valued extension property} at $\lambda_0 \in \mathbb{C}$ (SVEP at $\lambda_0$) if, for every open disc $D$ centered at $\lambda_0$, the only analytic function $f : D \rightarrow \mathcal{H}$ satisfying 
\begin{equation}
(T-\lambda I)f(\lambda) = 0 \quad \text{for all } \lambda \in D
\end{equation}
is the zero function $f \equiv 0$ \cite{aiena}. If the \textit{adjoint} $T^*$ of $T$ has SVEP, then $\sigma(T)=\sigma_a(T)$ \cite{aiena}.
Let $x \in \mathcal{H}$ and $T \in \mathcal{B(H)}$. Then the \textit{orbit} of $x$ with respect to the operator $T$, $\mathrm{Orb}$$(T:x)$ is defined as 
	\begin{center}
		$\mathrm{Orb}$$(T:x)=\lbrace T^n(x): n=0,1,2,3, \ldots\rbrace$.
	\end{center}
     An operator $T$ is said to be \textit{hypercyclic} if there exists some $x \in \mathcal{H}$ such that $\mathrm{Orb}$$(T:x)$ is dense in $\mathcal{H}$. Such a vector $x$ is called a \textit{hypercyclic vector} for $T$. If there exists a vector $x\in \mathcal{H}$ such that 
\begin{center}
	$\mathbb{C} \mathrm{Orb}$$(T:x)=\lbrace \lambda T^n(x): n=0,1,2,3, \ldots, \lambda \in \mathbb{C}\rbrace$
\end{center} is dense in $\mathcal{H}$, then $T$ is called a \textit{supercyclic operator} and $x$ is a \textit{supercyclic vector} for $T$.  Kitai \cite{kita} studied several fundamental results related to the theory of hypercyclic and supercyclic operators.\\
According to Coburn \cite{cob}, an operator $T \in \mathcal{B(H)}$ is said to satisfy Weyl's theorem if
$$\sigma(T)\setminus\sigma_{w}(T)=E_0(T).$$
An operator $T$ is said to satisfy Browder’s theorem if $\sigma_w(T)=\sigma_{b}(T)$. Berkani and  Kachad \cite{berk} introduced and studied property $(UW_E)$, a variant of Weyl type theorem. An operator $T \in \mathcal{B(H)}$ is said to satisfy the property $(UW_E)$ if $\sigma_a(T)\setminus\sigma_{uw}(T)=E(T)$ and property $(w)$, if $\sigma_a(T)\setminus\sigma_{uw}(T)=E_0(T)$ \cite{rako}. Recall that $T \in \mathcal{B(H)}$ satisfies property $(UW_E)$, then $E(T)=E_0(T)$ \cite{berk}. Observe that 
 \begin{center}
 	Property $(UW_E) \Longrightarrow $ Property $(w) \Longrightarrow$ Weyl's theorem $\Longrightarrow$ Browders theorem.
 \end{center}
 An operator $T \in \mathcal{B(H)}$ is said to satisfy property $(V_E)$ if $\sigma(T)\setminus\sigma_{uw}(T)=E(T)$. It is clear that $T \in \mathcal{B(H)}$ satisfies property $(V_E)$ if and only if $T$ satisfies property $(UW_E)$ and $\sigma(T)=\sigma_a(T)$ \cite{sana}. An operator $T \in \mathcal{B(H)}$ is said to satisfy property $(W_E)$ if $\sigma(T)\setminus\sigma_{w}(T)=E(T)$ \cite{berk}. We refer to \cite{jia,rako} for details about variants of Weyl type theorems such as $ a$-Weyl's theorem and $ a$-Browder's theorem.
 \begin{remark}\label{r1}
  Let $T \in \mathcal {B(H)}$. Then $T$ satisfies property $(UW_E)$ if and only if $T$ satisfies Weyl's theorem,  $\sigma_{w}(T)\setminus\sigma_{uw}(T)=\sigma(T)\setminus\sigma_{a}(T)$, and $E(T)=E_{0}(T)$.
\end{remark}
  
 Variants of Weyl type theorems for hypercyclic and supercyclic operators have been extensively researched (see, \cite{dug,elb,herr,hild,liu}). In the next section, we study property $(UW_E)$ for hypercyclic and supercyclic operators.
\section{Property $(UW_E)$ for hypercyclic and supercyclic operators}
We begin this section with a characterization theorem for hypercyclic and supercyclic operators that satisfy property $(UW_E)$.
\begin{theorem} \label{th5}
\begin{enumerate}
    \item  Let $T \in \mathcal{HP}$. Then $T$ satisfies property $(UW_E)$ if and only if $E(T)=\emptyset$.
    \item Let $T \in \mathcal{SP}$ and $E(T) \subseteq E_0(T^*)$. Then $T$ satisfies property $(UW_E)$ if and only if there is an $\alpha \in \mathbb{C} \setminus \sigma_b(T)$ such that $E(T) \subseteq \lbrace \alpha \rbrace$. 
\end{enumerate}
\end{theorem}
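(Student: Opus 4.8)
The plan is to push the strong eigenvalue restrictions that (super)cyclicity imposes on the adjoint through to the set $\sigma_a(T)\setminus\sigma_{uw}(T)$, which is what property $(UW_E)$ really controls. For part (1), recall that for hypercyclic $T$ the adjoint $T^*$ has no eigenvalues \cite{kita}: if $x$ is hypercyclic and $T^*y=\nu y$ with $y\neq 0$, then $\langle T^nx,y\rangle=\bar\nu^{n}\langle x,y\rangle$, and applying the continuous surjection $z\mapsto\langle z,y\rangle$ to the dense orbit would force $\{\bar\nu^{n}\langle x,y\rangle\}$ to be dense in $\mathbb C$, which is impossible. Hence $\mathcal R(T-\lambda I)$ is dense for every $\lambda\in\mathbb C$, and $T^*$ has SVEP, so $\sigma(T)=\sigma_a(T)$ \cite{aiena}. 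Now if $T-\lambda I\in SF_+(\mathcal H)$, its range is closed and dense, hence all of $\mathcal H$, so $\beta(T-\lambda I)=0$ and $ind(T-\lambda I)=\alpha(T-\lambda I)\le 0$ forces $\alpha(T-\lambda I)=0$; thus $T-\lambda I$ is invertible. Therefore $\sigma_a(T)\setminus\sigma_{uw}(T)=\emptyset$, and $T$ satisfies property $(UW_E)$ if and only if $E(T)=\emptyset$.

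For part (2), I would first record the classical facts that a supercyclic $T$ admits at most one eigenvalue $\mu_0$ of $T^*$ (proved as above, now comparing in $\mathbb C^2$ the directions of the vectors $(\bar\nu_1^{n}\langle x,y_1\rangle,\bar\nu_2^{n}\langle x,y_2\rangle)$ attached to two candidate eigenvalues $\nu_1\neq\nu_2$ of $T^*$), while cyclicity of $T$ forces $\dim\mathcal N(T^*-\nu I)\le 1$ for every $\nu$. As in part (1), $T^*$ has SVEP (its point spectrum being finite), so $\sigma(T)=\sigma_a(T)$; consequently property $(UW_E)$ is equivalent to property $(V_E)$ \cite{sana}, i.e.\ to $\sigma(T)\setminus\sigma_{uw}(T)=E(T)$. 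Since $\mathcal R(T-\lambda I)$ is dense for every $\lambda\neq\bar\mu_0$, the argument of part (1) gives $\sigma(T)\setminus\sigma_{uw}(T)\subseteq\{\bar\mu_0\}$; and $E(T)\subseteq E_0(T^*)\subseteq\{\mu_0\}$, so $E(T)$ consists of at most one point.

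The crux, and the step I expect to require the most care, is the dichotomy: if $\bar\mu_0\in\sigma(T)\setminus\sigma_{uw}(T)$, then $\mu_0\in\mathbb R$, $E(T)=\{\mu_0\}$, and $T-\mu_0 I$ is Browder. Indeed $T-\bar\mu_0 I\in SF_+(\mathcal H)$ has closed range, so $\beta(T-\bar\mu_0 I)=\dim\mathcal N(T^*-\mu_0 I)=1$ (positive because $\mu_0$ is an eigenvalue of $T^*$, and at most $1$ by cyclicity), whence $T-\bar\mu_0 I$ is Fredholm with $\alpha(T-\bar\mu_0 I)\le 1$. Since the semi-Fredholm region is open and the index is locally constant on it, for $\lambda$ near but different from $\bar\mu_0$ the operator $T-\lambda I$ is Fredholm of the same index and, having dense range, is surjective; hence $ind(T-\lambda I)=\alpha(T-\lambda I)\ge 0$, which forces $ind(T-\bar\mu_0 I)=0$ and then $\alpha(T-\lambda I)=0$, so $T-\lambda I$ is invertible near $\bar\mu_0$. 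Thus $\bar\mu_0$ is isolated in $\sigma(T)$ and $\alpha(T-\bar\mu_0 I)=\beta(T-\bar\mu_0 I)=1>0$, so $\bar\mu_0\in E(T)\subseteq\{\mu_0\}$, forcing $\mu_0=\bar\mu_0$; and a Fredholm operator at an isolated point of the spectrum is Browder.

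With the dichotomy in hand the equivalence follows by a short case check. If $\sigma(T)\setminus\sigma_{uw}(T)=\emptyset$: property $(V_E)$ holds exactly when $E(T)=\emptyset$, and in that case any $\alpha\in\mathbb C\setminus\sigma_b(T)$ (which exists, $\sigma_b(T)$ being compact) witnesses the right-hand condition; if instead $E(T)=\{\mu_0\}$, then $\mu_0\in\sigma_{uw}(T)$, so $T-\mu_0 I$ is not Weyl, hence not Browder, and the right-hand condition fails as well. If $\sigma(T)\setminus\sigma_{uw}(T)=\{\bar\mu_0\}$, the dichotomy gives $E(T)=\{\mu_0\}=\{\bar\mu_0\}$ with $T-\mu_0 I$ Browder, so property $(V_E)$ holds and $\alpha:=\mu_0\in\mathbb C\setminus\sigma_b(T)$ witnesses the right-hand side. (If $T^*$ has no eigenvalue, one is back in the situation of part (1) with $E(T)=\emptyset$, and both conditions hold.) Apart from the local index argument in the dichotomy, everything is bookkeeping with the definitions and the facts recalled in Section 2.
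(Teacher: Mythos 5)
Your proof is correct, and it takes a genuinely different route from the paper's. The paper argues largely by citation: part (1) is reduced through Remark \ref{r1} to Weyl's theorem and to \cite[Theorem 1.5, Proposition 1.2]{dug}, and in part (2) the forward implication is read off from \cite[Proposition 1.2]{dug} (without using property $(UW_E)$ at all), the converse being a chain of inclusions ending in $P_{00}(T)$. You instead work from first principles: the classical facts that $\sigma_p(T^*)=\emptyset$ for hypercyclic $T$ and that $\sigma_p(T^*)$ is at most a singleton of geometric multiplicity one for supercyclic $T$ give dense range of $T-\lambda I$ away from one conjugate point, and closed range plus index $\le 0$ then yields $\sigma_a(T)\setminus\sigma_{uw}(T)=\emptyset$ in (1), so property $(UW_E)$ collapses to $E(T)=\emptyset$; in (2) your local index argument shows the only candidate point of $\sigma(T)\setminus\sigma_{uw}(T)$ is $\bar\mu_0$, which is then isolated with $\alpha(T-\bar\mu_0 I)=\beta(T-\bar\mu_0 I)=1$, hence Browder, and the hypothesis $E(T)\subseteq E_0(T^*)$ forces $\bar\mu_0=\mu_0$; the four-case bookkeeping then closes the equivalence (including the vacuous-looking subcase where both sides fail). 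The paper's route is shorter because the spectral work is outsourced to \cite{dug}; yours is self-contained (needing only the classical cyclicity facts \cite{kita,hild,herr}, SVEP from a point spectrum with empty interior, and local constancy of the Fredholm index), bypasses the Weyl's-theorem machinery of Remark \ref{r1} entirely, and yields extra structural information: the exceptional point in (2), when it occurs, is automatically a simple Browder eigenvalue, and your ``only if'' direction is a genuine verification rather than an immediate consequence of Duggal's proposition.
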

\begin{proof}
    \begin{enumerate}
\item Suppose $T$ is hypercyclic and satisfies property $(UW_E)$. Then by Remark \ref{r1}, $T$ satisfies Weyl's theorem and $E(T) = E_0(T)$. Since $T$  satisfies
 Weyl’s theorem, it follows from \cite[Theorem 1.5]{dug} that 
 \begin{center}
     $P_{00}(T)=E(T) = E_0(T)$.
 \end{center}
 This shows that $E_0(T^*)=\emptyset$ and hence 
 \begin{center}
     $E(T) = E_0(T)=P_{00}(T)=P_{00}(T^*)=E_0(T^*)=\emptyset$.
 \end{center}
 Conversely, assume that $T \in \mathcal{HP}$ with $E(T)=\emptyset$. This implies that $E_0(T^*)=\emptyset$. Then from \cite[Proposition 1.2]{dug}, $T$  satisfies Weyl’s theorem. Since $\sigma_p(T^*)=\emptyset$, $T^*$ has SVEP. From \cite[corollary 2.45]{aiena2004fredholm}, we have $\sigma(T)=\sigma_a(T)$. Now by \cite[Proposition 1.2]{dug}, we have $\sigma_w(T)=\sigma_{uw}(T)$. Thus, \begin{center}
     $\sigma_a(T)\setminus\sigma_{uw}(T)= \sigma(T)\setminus\sigma_{w}(T)=E_0(T)=\emptyset=E(T)$.
 \end{center}
 Hence $T$ satisfies property $(UW_E)$.
 \item 	Let $T \in \mathcal{SP}$ and $E(T) \subseteq E_{0}(T^*)$. Then from \cite[Proposition 1.2]{dug}, either
	\begin{align*}
		E_{0}(T^*) = \emptyset  ~\text{or}~ E_{0}(T^*)=\lbrace \alpha \rbrace
	\end{align*}
	  for some nonzero $\alpha \in \mathbb{C} \setminus \sigma_b(T)$. Hence we obtain that there is an $\alpha \in \mathbb{C} \setminus \sigma_b(T)$ such that $E(T) \subseteq \lbrace \alpha \rbrace$.\par Conversely, suppose that $T \in \mathcal{SP}, E(T) \subseteq E_{0}(T^*)$, and there is an $\alpha \in \mathbb{C} \setminus \sigma_b(T)$ such that $E(T) \subseteq \lbrace \alpha \rbrace$. Since $T$ is a supercyclic operator, $T^*$ has the SVEP and hence $\sigma(T)= \sigma_a(T)$. From \cite[Proposition 1.2]{dug}, we have  $$\sigma_w(T)=\sigma_{uw}(T).$$ Thus,
	\begin{equation*}
		\begin{split} 
			\sigma_a(T) \setminus \sigma_{uw}(T)&=\sigma(T) \setminus \sigma_w(T)\subseteq P_{00}(T) \subseteq \pi_{00}(T) \subseteq \pi_0(T) \subseteq \pi_{00}(T^*)\\&= P_{00}(T^*)=P_{00}(T).
		\end{split}
	\end{equation*} Therefore, $T$ satisfies property $(UW_E)$. 
        \end{enumerate}
\end{proof}
\begin{corollary}
    If $T \in \mathcal{HP}\cup\mathcal{SP}$ and $\bigcap_{n=1}^{\infty}\mathcal{R}(T^n)=\lbrace0\rbrace$, then $T$ satisfies property $(V_E)$, and consequently property $(UW_E)$.
\end{corollary}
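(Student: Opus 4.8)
The plan is to read this off from Theorem~\ref{th5} once two consequences of the hypothesis $\bigcap_{n=1}^{\infty}\mathcal{R}(T^n)=\{0\}$ are isolated. Recall from Section~2 that $T$ satisfies property $(V_E)$ if and only if $T$ satisfies property $(UW_E)$ and $\sigma(T)=\sigma_a(T)$, and that $(V_E)$ implies $(UW_E)$; hence the ``consequently'' is automatic and it suffices to establish these two facts. The equality $\sigma(T)=\sigma_a(T)$ is the easy one: every $T\in\mathcal{HP}\cup\mathcal{SP}$ has $\sigma_p(T^*)$ empty (hypercyclic case) or a singleton (supercyclic case), so $T^*$ has SVEP and $\sigma(T)=\sigma_a(T)$ by \cite[Corollary 2.45]{aiena2004fredholm}, exactly as in the proof of Theorem~\ref{th5}.

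For property $(UW_E)$ I would prove $E(T)=\emptyset$. Granting this, Theorem~\ref{th5}(1) gives $(UW_E)$ in the hypercyclic case, while in the supercyclic case $E(T)=\emptyset\subseteq E_0(T^*)$ and $E(T)=\emptyset\subseteq\{\alpha\}$ for any $\alpha\in\mathbb{C}\setminus\sigma_b(T)$ (such $\alpha$ exists, since $\sigma_b(T)\subseteq\sigma(T)$ is bounded), so Theorem~\ref{th5}(2) applies. To see $E(T)=\emptyset$, suppose $\lambda\in E(T)$; then $\lambda$ is an isolated point of $\sigma(T)$ with $\alpha(T-\lambda I)>0$. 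Let $\mathcal{H}=M_0\oplus M_1$ be the Riesz decomposition at $\{\lambda\}$, so that $M_0,M_1$ are $T$-invariant, $\sigma(T|_{M_0})=\{\lambda\}$, $\sigma(T|_{M_1})=\sigma(T)\setminus\{\lambda\}$, and $M_0\neq\{0\}$. If $\lambda\neq 0$ then $T|_{M_0}$ is invertible on $M_0$, so $M_0=(T|_{M_0})^n(M_0)\subseteq\mathcal{R}(T^n)$ for every $n$, whence $M_0\subseteq\bigcap_n\mathcal{R}(T^n)=\{0\}$, a contradiction. If $\lambda=0$ then $0\notin\sigma(T|_{M_1})$, so $T|_{M_1}$ is invertible on $M_1$ and the same reasoning gives $M_1\subseteq\bigcap_n\mathcal{R}(T^n)=\{0\}$; thus $M_1=\{0\}$, $\sigma(T)=\{0\}$, and $T$ is quasinilpotent.

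So the argument reduces to excluding the possibility that $T$ is quasinilpotent with $0\in E(T)$. For $T\in\mathcal{HP}$ this is immediate: a quasinilpotent $T$ has $\|T^n\|\to 0$, so $T^nx\to 0$ for every $x$ and no orbit can be dense. The main obstacle is the supercyclic case, where orbits are rescaled by arbitrary scalars and this bounded-orbit argument fails; indeed quasinilpotent supercyclic operators with nonzero kernel do exist (e.g.\ a unilateral backward weighted shift with weights tending to $0$), so the hypothesis $\bigcap_n\mathcal{R}(T^n)=\{0\}$ is genuinely needed here --- for such a shift $T$ one has $e_0\in\mathcal{R}(T^n)$ for every $n$, so $\bigcap_n\mathcal{R}(T^n)\neq\{0\}$. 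The remaining task is therefore to show that a supercyclic operator $T$ with $\bigcap_n\mathcal{R}(T^n)=\{0\}$ cannot be quasinilpotent with $\mathcal{N}(T)\neq\{0\}$. Since a supercyclic operator has dense range (hence $\mathcal{R}(T^n)$ is dense for all $n$) while $\|T^n\|\to 0$ in the quasinilpotent case, I would try to manufacture a nonzero vector lying in every $\mathcal{R}(T^n)$ --- for instance by extracting a convergent subsequence of the projective orbit converging into $\mathcal{N}(T)$ and pulling it back through $T$ --- to contradict the hypothesis. Once $E(T)=\emptyset$ is in hand, the corollary follows as described above.
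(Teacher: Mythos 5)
Most of your argument is sound and, up to one corner case, runs parallel to the paper's. Your Riesz-decomposition step for $\lambda\neq 0$ (the spectral subspace of an isolated point away from $0$ satisfies $M_0=(T|_{M_0})^nM_0\subseteq\mathcal{R}(T^n)$ for all $n$, hence is trivial) is precisely the mechanism behind the fact the paper invokes: the paper's proof consists of citing \cite[Proposition 2]{schmoeger1994operators} to get that $\bigcap_{n}\mathcal{R}(T^n)=\{0\}$ forces $\sigma(T)$ to be connected, and then applying Theorem~\ref{th5}. So your reduction to ``either $E(T)=\emptyset$ or $\sigma(T)=\{0\}$ with $0\in E(T)$'', your exclusion of the quasinilpotent hypercyclic case, your observation that $E(T)=\emptyset$ makes the side hypothesis $E(T)\subseteq E_0(T^*)$ of Theorem~\ref{th5}(2) vacuous, and your passage from $(UW_E)$ to $(V_E)$ via SVEP of $T^*$ are all correct and essentially reprove, in the special case needed, the connectedness statement the paper cites.

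The genuine gap is the supercyclic quasinilpotent case with $\mathcal{N}(T)\neq\{0\}$: you do not prove it cannot occur under the hypothesis, you only sketch an idea, and the sketch does not work as stated. The projective orbit has no reason to admit convergent subsequences (the unit sphere of $\mathcal{H}$ is not compact), and even if some normalized subsequence $T^{n_k}x/\|T^{n_k}x\|$ converged to a vector in $\mathcal{N}(T)$, there is no mechanism for ``pulling it back through $T$'' to produce one fixed nonzero vector lying in every $\mathcal{R}(T^n)$: these ranges are merely dense, non-closed subspaces, so membership in them is not preserved under limits. Moreover this case is exactly where the statement could fail: if a supercyclic quasinilpotent $T$ with $\alpha(T)>0$ had $\bigcap_n\mathcal{R}(T^n)=\{0\}$, then $E(T)=\{0\}$ while $\sigma_a(T)\setminus\sigma_{uw}(T)=\emptyset$, so property $(UW_E)$ (hence $(V_E)$) would fail; thus this possibility must be excluded, not merely flagged. (For what it is worth, the paper's two-line proof handles the generic situation --- a connected spectrum containing $0$ and at least one more point has no isolated points, so $E(T)=\emptyset$ --- and is silent on this same singleton-spectrum supercyclic corner; your write-up has the merit of making it explicit, but to be a complete proof you must close it, e.g.\ by showing that a supercyclic quasinilpotent operator with nonzero kernel cannot have trivial hyperrange, or by some other argument.)
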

\begin{proof}
	Since $\bigcap_{n=1}^{\infty}\mathcal{R}(T^n)=\lbrace0\rbrace$, it follows from \cite[Proposition 2]{schmoeger1994operators} that $\sigma(T)$ is connected. Then by Theorem \ref{th5}, the result follows.
\end{proof}

It is also true that an operator $T \in \mathcal{HP}$ satisfies property $(V_E)$ if and only if $\pi_0(T)=\emptyset$. Denote $\sigma_{le}(T)=\lbrace \lambda \in \mathbb{C} : T- \lambda I \notin U_{SF}(\mathcal{H}) \rbrace$, $\sigma_{re}(T)=\lbrace \lambda \in \mathbb{C} : T- \lambda I \notin L_{SF}(\mathcal{H}) \rbrace$, and $\partial \sigma$ is the boundary of the set $\sigma$.

\begin{corollary}
Let $T \in \mathcal{HP}$ with any one of the following conditions:
		\begin{enumerate}
		\item $\sigma_e(T) \cap \partial \mathbb{D} =\emptyset$.
	\item $\sigma_{le}(T) \cap \partial \mathbb{D} =\emptyset$.
	\item $\sigma_{re}(T) \cap \partial \mathbb{D} =\emptyset$.
	\item $\partial \sigma_e(T) \cap \partial \mathbb{D} =\emptyset$.
\end{enumerate}
	Then $T$ satisfies property $(V_E)$ and consequently property $(UW_E)$. 
\end{corollary}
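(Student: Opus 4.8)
The plan is to reduce the assertion to the equality $E(T)=\emptyset$ and then to exploit the spectral rigidity of a hypercyclic operator. For the reduction, note first that if $T\in\mathcal{HP}$ then $T^{*}$ has no eigenvalues (Kitai \cite{kita}), so $T^{*}$ has SVEP and hence $\sigma(T)=\sigma_{a}(T)$ by the criterion recalled in the Preliminaries. Consequently, property $(V_{E})$ for $T$ is equivalent to property $(UW_{E})$ for $T$, which by Theorem~\ref{th5}(1) is equivalent to $E(T)=\emptyset$; and $(V_{E})$ trivially implies $(UW_{E})$. Thus it is enough to show that each of the hypotheses (1)--(4) forces $E(T)=\emptyset$.

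Suppose, towards a contradiction, that $\lambda_{0}\in E(T)$, so that $\lambda_{0}$ is an isolated point of $\sigma(T)$ with $\alpha(T-\lambda_{0}I)>0$. Since $\lambda_{0}$ is isolated, the Riesz decomposition supplies a nonzero $T$-invariant direct summand $\mathcal{H}_{0}$ with $\sigma(T|_{\mathcal{H}_{0}})=\{\lambda_{0}\}$, and $T|_{\mathcal{H}_{0}}$ is hypercyclic (the continuous image of a dense orbit under the coordinate projection is a dense orbit). If $|\lambda_{0}|<1$ then $\|(T|_{\mathcal{H}_{0}})^{n}\|\to 0$, while if $|\lambda_{0}|>1$ then $\|(T|_{\mathcal{H}_{0}})^{-n}\|\to 0$ and every orbit of $T|_{\mathcal{H}_{0}}$ escapes to infinity; either way no orbit can be dense, a contradiction. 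Hence $|\lambda_{0}|=1$, i.e.\ $\lambda_{0}\in\partial\mathbb{D}$. (Alternatively, $\lambda_{0}\in\partial\mathbb{D}$ follows at once from Kitai's theorem that every connected component of $\sigma(T)$ meets the unit circle.)

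Next I would show that, under any one of (1)--(4), the operator $T-\lambda_{0}I$ is Fredholm. Under (1) this is immediate, since $\lambda_{0}\in\partial\mathbb{D}$ and $\sigma_{e}(T)\cap\partial\mathbb{D}=\emptyset$. Under (2), $\lambda_{0}\notin\sigma_{le}(T)$, so $T-\lambda_{0}I\in U_{SF}(\mathcal{H})$; because $\lambda_{0}$ is isolated in $\sigma(T)$, a small disc around $\lambda_{0}$ lies in $\rho_{SF}(T)$ (its punctured part being in $\mathbb{C}\setminus\sigma(T)$, where the index is $0$), and the index is locally constant on $\rho_{SF}(T)$, so $ind(T-\lambda_{0}I)=0$; an upper semi-Fredholm operator of index $0$ is Fredholm. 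Case (3) is symmetric with $L_{SF}(\mathcal{H})$ in place of $U_{SF}(\mathcal{H})$. Under (4), if $\lambda_{0}\in\sigma_{e}(T)$ then, as $\sigma_{e}(T)\subseteq\sigma(T)$ and $\lambda_{0}$ is isolated in $\sigma(T)$, $\lambda_{0}$ is an isolated — hence boundary — point of $\sigma_{e}(T)$, so $\lambda_{0}\in\partial\sigma_{e}(T)\cap\partial\mathbb{D}=\emptyset$, a contradiction; thus $\lambda_{0}\notin\sigma_{e}(T)$. In all four cases $T-\lambda_{0}I$ is Fredholm, and since $\lambda_{0}$ is isolated in $\sigma(T)$ it is in fact Browder; hence the Riesz idempotent of $T$ at $\{\lambda_{0}\}$ has range that is nonzero (because $\lambda_{0}\in\sigma(T)$) and finite-dimensional (because $T-\lambda_{0}I$ is Fredholm). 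This forces the summand $\mathcal{H}_{0}$ above to be finite-dimensional and nonzero, so $T|_{\mathcal{H}_{0}}$ is a hypercyclic operator on a nonzero finite-dimensional space — impossible. Therefore $E(T)=\emptyset$, and by the first paragraph $T$ satisfies $(V_{E})$, hence $(UW_{E})$.

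I expect the main obstacle to be the Fredholm bookkeeping in cases (2) and (3): the index-continuity step there is genuinely needed, since $\sigma_{e}(T)=\sigma_{le}(T)\cup\sigma_{re}(T)$ means that $\sigma_{le}(T)\cap\partial\mathbb{D}=\emptyset$ (or $\sigma_{re}(T)\cap\partial\mathbb{D}=\emptyset$) alone does not yield $\sigma_{e}(T)\cap\partial\mathbb{D}=\emptyset$, so one cannot simply collapse (2) and (3) into (1). Everything else rests on standard facts — emptiness of $\sigma_{p}(T^{*})$ for hypercyclic $T$, the passage to $\sigma(T)=\sigma_{a}(T)$, local constancy of the index on $\rho_{SF}(T)$, and the rigidity excluding a nonzero finite-dimensional direct summand of a hypercyclic operator.
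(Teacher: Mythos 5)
Your proof is correct, but it follows a genuinely different route from the paper's. The paper disposes of the corollary in two lines: it invokes Duggal's result (\cite[Corollary~1.4]{dug}) that each of the hypotheses (1)--(4) forces $\sigma(T)$ to be connected for a hypercyclic $T$, and then applies Theorem~\ref{th5}(1) (implicitly using that a connected spectrum which, under these hypotheses, cannot be a singleton on $\partial\mathbb{D}$ has no isolated points, so $E(T)=\emptyset$, together with $\sigma(T)=\sigma_a(T)$ for the passage to $(V_E)$). You instead argue locally and self-containedly: any $\lambda_0\in E(T)$ must lie on $\partial\mathbb{D}$ (via the Riesz decomposition and the hypercyclicity of the quasi-factor $T|_{\mathcal{H}_0}$, or Kitai's theorem that every component of $\sigma(T)$ meets the circle); each of (1)--(4) then forces $T-\lambda_0 I$ to be Fredholm -- your index-continuity step in cases (2) and (3) and the isolated-point argument in case (4) are exactly the points that need care, and they are handled correctly -- whence $\lambda_0$ is a Riesz point with nonzero finite-rank spectral idempotent, contradicting hypercyclicity of the induced operator on the finite-dimensional summand. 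What the paper's approach buys is brevity, at the cost of leaning on the cited connectedness result and leaving the singleton-spectrum case implicit; what yours buys is a self-contained argument that never needs connectedness of $\sigma(T)$, makes the singleton issue explicit, and in fact proves a slightly sharper statement, since the hypotheses are only used at isolated eigenvalues lying on $\partial\mathbb{D}$.
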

\begin{proof}
	It follows from \cite[Corollary 1.4]{dug} that if we assume any one of the conditions (1)-(4), we get that $\sigma(T)$ is connected. Then the result follows  by Theorem \ref{th5}.
\end{proof}
\begin{remark}
     Let $L^\infty(\mathbb{T})$ be the space of all essentially bounded functions in the unit circle $\mathbb{T}, L^2(\mathbb{T})$ be the space of all square-integrable functions and $H^2(\mathbb{T})$ be the Hardy space consisting of all analytic functions on $\mathbb{T}$. For $\phi \in L^\infty(\mathbb{T})$, the Toeplitz operator on $H^2(\mathbb{T})$ is given by
	 \begin{center}
	 	$T_\varPhi(f)= P(\varPhi f)$,
	 \end{center}
	 where $f \in H^2(\mathbb{T})$ and $P$ is the orthogonal projection of $L^2(\mathbb{T})$ onto $H^2(\mathbb{T})$. If $T_\phi \in \mathcal{HP}\cup\mathcal{SP}$ and $\phi \in L^\infty(\mathbb{T})$ are non-constant, then $T_\phi$ on the Hardy space satisfies property $(V_E)$, property $(UW_E)$ and property $(W_E)$. This follows from Theorem \ref{th5} due to the fact that $\sigma(T_\phi)$ is connected \cite{wang}.
\end{remark}
Denote by $\overline{\mathcal{HP}}$ the norm closure of the class of hypercyclic operators, and by $\overline{\mathcal{SP}}$ that of supercyclic operators. The following example illustrates that operators can satisfy property $(UW_E)$ without belonging to the class $\overline{\mathcal{HP}}$.
\begin{example}

	\normalfont
	Let $A \in B(l^2 \oplus l^2)$ be defined as 
	\begin{equation*}
		A = 
		\begin{bmatrix}
			T_1 &  0\\
			0 & T_2 
		\end{bmatrix}, 
	\end{equation*}	where 
	\begin{align*}
		T_1(x_1,x_2,x_3,\ldots)=\left(x_1,\frac{x_2}{2},\frac{x_3}{3},\ldots\right), 
		\end{align*}
	\begin{align*} T_2(x_1,x_2,x_3,\ldots)=\left(0,0,\frac{x_2}{2},\frac{x_3}{3},\ldots\right).
	\end{align*} Then
	\begin{equation*}
		\sigma(A)=\sigma_a(A)= \lbrace 0,1,\frac{1}{2}, \frac{1}{3},\ldots\rbrace, ~\sigma_{uw}(A)=\sigma_b(A)=\lbrace0\rbrace 
	\end{equation*}and 
\begin{equation*}
	\pi_0(A)=\lbrace 1,\frac{1}{2}, \frac{1}{3},\ldots\rbrace.
	\end{equation*} 
Thus, $A$ satisfies property $(UW_E)$. Since $\sigma_w(A) \cup \partial \mathbb{D}$ is not connected and $\sigma(A) \neq \sigma_b(A)$, $A \notin \overline{\mathcal{HP}}$ by \cite[ Theorem 2.1]{herr}.
\end{example}
The following example shows that there are operators in $\overline{\mathcal{HP}}$ which do not satisfy property $(V_E)$.
\begin{example}

	\normalfont
	Let $V \in B(l^2 \oplus l^2)$ be given as
	\begin{equation*}
		V = 
		\begin{bmatrix}
			A &  0\\
			0 & B 
		\end{bmatrix}, 
	\end{equation*}where
	\begin{align*}
		A(x_1,x_2,x_3,\ldots)=(0,x_1,x_2,x_3,\ldots),B(x_1,x_2,x_3,\ldots)=(x_2,x_3,\ldots).
	\end{align*}
	It can be seen that $\pi_0(V)=\emptyset$, $0 \in \sigma(V)$ and also $0 \notin \sigma_{uw}(V)(= \partial\mathbb{D})$. Thus, $0 \in \sigma(V) \setminus \sigma_{uw}(V)$ but $ 0\notin \pi_0(V)$. Therefore, $V$ does not satisfy property $(V_E)$. From \cite[Theorem 2.1]{herr}, it follows that $V \in \overline{\mathcal{HP}}$.
	
\end{example}
There are operators that do not belong to $\overline{\mathcal{HP}}$ and do not satisfy property $(V_E)$ as we see in the following example. 
\begin{example}
	\normalfont
	Let $T \in B(l^2)$ be defined as 
	\begin{align*}
		T(x_1,x_2,x_3,x_4,x_5,\ldots)=\left(0,x_1,0,\frac{x_3}{3},0,\frac{x_5}{5},\ldots\right). 
	\end{align*}
	Then \begin{center}$\sigma(T)=\sigma_a(T)=\sigma_{uw}(T)=\sigma_{w}(T)=\pi_0(T)=\lbrace 0 \rbrace$.\end{center} This shows that $\sigma_w(T) \cup \partial \mathbb{D}$ is not connected and hence $T \notin \overline{\mathcal{HP}}$ by \cite[ Theorem 2.1]{herr}. Also, $T$ does not satisfy property $(UW_E)$.
\end{example}
The following is an example of operator $T$ that satisfies property $(UW_E)$ but not in $\overline{\mathcal{SP}}$.
\begin{example}
	\normalfont
	Let $T$ be an operator on $l^2$, given by 
	\begin{center}
		$T(x_1,x_2,x_3,\ldots)=(0,x_1,0,x_2,0,x_3,\ldots)$.
	\end{center}
	 We obtain that
	\begin{center}
		$\sigma(T)= \overline{\mathbb{D}}$, $\sigma_a(T)=\partial\mathbb{D}=\sigma_{uw}(T)$ and $\pi_0(T)=\emptyset$.
	\end{center}
	  Thus, $T$ satisfies property $(UW_E)$. Since 
	\begin{center}
		 $\lbrace \lambda \in \rho_{SF}(T): ind(T-\lambda I)<0 \rbrace \neq \emptyset,$
	\end{center}  $T \notin \overline{\mathcal{SP}}$ by \cite[Theorem 3.3]{herr}. 
\end{example} 
The above example is discussed in \cite{liu} to show that $a$-Weyl’s theorem is not a sufficient condition for $T \in \overline{\mathcal{HP}}$. Bakkali and Tajmouati \cite{elb} studied operators in $\overline{\mathcal{HP}}$ and $\overline{\mathcal{SP}}$ that satisfy property $(w)$ and characterized them. The following theorem is a characterization of operators in the norm closure of hypercyclic and supercyclic operators that satisfy property $(UW_E)$. 
\begin{theorem}
	Suppose that $T \in \mathcal {B(H)}$ has property $(UW_E)$. Then the following statements hold.
	\begin{enumerate}
	\item $T \in \overline{\mathcal{HP}}$ if and only if $\sigma(T) \cup \partial \mathbb{D}$ is connected.
	\item  $T \in \overline{\mathcal{SP}}$ if and only if $\sigma(T) \cup \partial (r{\mathbb{D}})$ is connected for some $r \geq0$, where $\partial (r{\mathbb{D}})$ is the boundary of the disc $r\mathbb{D}=\lbrace  \lambda \in \mathbb{C}: | \lambda| < r \rbrace$.
\end{enumerate}
\end{theorem}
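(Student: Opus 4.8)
The plan is to derive both equivalences from Herrero's topological descriptions of the norm closures $\overline{\mathcal{HP}}$ and $\overline{\mathcal{SP}}$ (\cite[Theorem 2.1]{herr} and \cite[Theorem 3.3]{herr}), combined with the spectral rigidity that property $(UW_E)$ forces on $T$. The ingredients I would use repeatedly are: since $(UW_E)$ implies Weyl's theorem, hence Browder's theorem, we have $\sigma_w(T)=\sigma_b(T)$ and $\sigma(T)\setminus\sigma_w(T)=E_0(T)=E(T)$; and by Remark \ref{r1}, $\sigma_w(T)\setminus\sigma_{uw}(T)=\sigma(T)\setminus\sigma_a(T)$. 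Consequently $\sigma(T)$ is the disjoint union of the closed set $\sigma_w(T)$ and a set $E(T)$ of isolated points of $\sigma(T)$, and the Riesz points of $T$ form exactly the set $P_{00}(T)=\sigma(T)\setminus\sigma_b(T)=\sigma(T)\setminus\sigma_w(T)=E(T)$.

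\emph{Forward implication.} Suppose $T\in\overline{\mathcal{HP}}$. By \cite[Theorem 2.1]{herr}, $\sigma_w(T)\cup\partial\mathbb{D}$ is connected and $T$ has no Riesz points, i.e.\ $\sigma(T)=\sigma_b(T)$. Since $\sigma_b(T)=\sigma_w(T)$, this gives $\sigma(T)=\sigma_w(T)$, whence $\sigma(T)\cup\partial\mathbb{D}=\sigma_w(T)\cup\partial\mathbb{D}$ is connected. For (2) one argues identically with \cite[Theorem 3.3]{herr}, with $\partial(r\mathbb{D})$ in place of $\partial\mathbb{D}$ for the radius $r\ge 0$ provided by that theorem, allowing for the single exceptional Riesz point permitted for supercyclic operators (cf.\ Theorem \ref{th5}(2)).

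\emph{Backward implication.} Assume $\sigma(T)\cup\partial\mathbb{D}$ is connected. The first step is to note that $E(T)\subseteq\partial\mathbb{D}$: if $\lambda_0\in E(T)$ with $\lambda_0\notin\partial\mathbb{D}$, then, since $\lambda_0$ is isolated in $\sigma(T)$, a sufficiently small disc about $\lambda_0$ meets $\sigma(T)\cup\partial\mathbb{D}$ only in $\{\lambda_0\}$, so $\{\lambda_0\}$ is a proper clopen subset of $\sigma(T)\cup\partial\mathbb{D}$, contradicting connectedness. Hence $\sigma_w(T)\cup\partial\mathbb{D}=\sigma(T)\cup\partial\mathbb{D}$ is connected, which is the connectedness hypothesis required by \cite[Theorem 2.1]{herr}. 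It then remains to verify the other hypotheses of that theorem — the index-sign condition $ind(T-\lambda I)\ge 0$ on the semi-Fredholm domain and the exclusion of Riesz points — and to do so by translating through the identities above: an index $ind(T-\lambda I)<0$ places $\lambda$ in $\sigma_w(T)\setminus\sigma_{uw}(T)=\sigma(T)\setminus\sigma_a(T)$, while a Riesz point lies in $E(T)\subseteq\partial\mathbb{D}$, and one uses the connectedness of $\sigma(T)\cup\partial\mathbb{D}$ to constrain these sets and then invoke \cite[Theorem 2.1]{herr} to conclude $T\in\overline{\mathcal{HP}}$. Part (2) is handled the same way, replacing $\partial\mathbb{D}$ by $\partial(r\mathbb{D})$ throughout and taking $r$ from the connectedness hypothesis.

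\emph{The main obstacle.} The hardest step is the end of the backward implication: passing from bare connectedness of $\sigma(T)\cup\partial\mathbb{D}$ (respectively $\sigma(T)\cup\partial(r\mathbb{D})$) to the full list of Herrero's requirements, since neither the index-sign condition nor the Riesz-point condition is a consequence of property $(UW_E)$ by itself. Making this work requires exploiting the connectedness hypothesis together with the equalities of Remark \ref{r1} much more carefully than for the forward implication, and this is where I would expect the real content of the proof — and any subtle extra hypotheses — to lie.
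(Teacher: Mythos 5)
Your forward implication coincides with the paper's own argument: from \cite[Theorem 2.1]{herr} one gets that $\sigma_w(T)\cup\partial\mathbb{D}$ is connected and $\sigma(T)\setminus\sigma_b(T)=\emptyset$, and since property $(UW_E)$ implies Browder's theorem, $\sigma_b(T)=\sigma_w(T)$, hence $\sigma(T)=\sigma_w(T)$ and $\sigma(T)\cup\partial\mathbb{D}$ is connected; this is exactly how the paper argues, and (2) is treated analogously via \cite[Theorem 3.3]{herr}. So up to that point there is nothing to object to.

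The backward implication is where your proposal has a genuine gap, and you say so yourself. Your clopen-point argument correctly gives $E(T)\subseteq\partial\mathbb{D}$ and hence that $\sigma_w(T)\cup\partial\mathbb{D}=\sigma(T)\cup\partial\mathbb{D}$ is connected, but that only yields the first of Herrero's three conditions; the remaining two, $\sigma(T)\setminus\sigma_b(T)=\emptyset$ and $ind(T-\lambda I)\geq 0$ for all $\lambda\in\rho_{SF}(T)$, are never established, and the observation that an index-negative point lies in $\sigma_w(T)\setminus\sigma_{uw}(T)=\sigma(T)\setminus\sigma_a(T)$ produces no contradiction with connectedness. In fact this step cannot be completed from the stated hypotheses: the isometry $T(x_1,x_2,x_3,\ldots)=(0,x_1,0,x_2,0,x_3,\ldots)$, which the paper itself uses shortly before this theorem, satisfies property $(UW_E)$, has $\sigma(T)\cup\partial\mathbb{D}=\overline{\mathbb{D}}$ connected (and $\sigma(T)\cup\partial(r\mathbb{D})$ connected for $r=1$), yet $ind(T-\lambda I)<0$ for $|\lambda|<1$, which is precisely the reason the paper excludes it from $\overline{\mathcal{SP}}$ (hence also from $\overline{\mathcal{HP}}$) via \cite[Theorem 3.3]{herr}; similarly, taking $A$ to be multiplication by $e^{i\theta}$ on $L^2$ of the arc $\{e^{i\theta}:\pi/4\leq\theta\leq 7\pi/4\}$ and $T=A\oplus[1]$, one checks that $T$ satisfies $(UW_E)$, that $\sigma(T)\cup\partial\mathbb{D}=\partial\mathbb{D}$ is connected, but that $1$ is a Riesz point, so $\sigma(T)\setminus\sigma_b(T)\neq\emptyset$. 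Thus neither the index condition nor the Riesz-point condition follows from $(UW_E)$ together with connectedness, and your suspicion that ``subtle extra hypotheses'' are needed is exactly right. For comparison, the paper does not carry out this step either: it proves only the forward direction in detail and disposes of the converse of (1), and of all of (2), by citing ``a similar argument as in \cite[Theorem 2.1]{elb}'' and \cite[Theorem 3.3]{herr}, so the converse direction rests on hypotheses or arguments not reproduced in the text rather than on a derivation your sketch could be completed into.
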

\begin{proof}
	
	\item(1) Let $T \in \overline{\mathcal{HP}}$. Then by \cite[Theorem 2.1]{herr}, $\sigma_{w}(T) \cup \partial \mathbb{D}$ is connected, $\sigma(T)\setminus \sigma_b(T)=\emptyset$ and $ind(T-\lambda I) \geq 0$ for all $\lambda \in \rho_{SF}(T)$. Since $T$ satisfies property $(UW_E)$, $T$ satisfies Browder's theorem. Therefore, $\sigma_b(T)=\sigma_{w}(T)$ and hence $\sigma_w(T)=\sigma(T)$. Thus, $\sigma(T) \cup \partial \mathbb{D}$ is connected. A similar argument as in \cite[Theorem 2.1]{elb} gives the converse part.
	\item(2) Follows from \cite[Theorem 3.3]{herr} and \cite[Theorem 2.1]{elb}.
\end{proof}
\section{Weyl type theorems and Toeplitz operators on the Bergman Space}
The invariance of various Weyl type theorems under compact perturbations has been studied by numerous researchers \cite{aiena,jia,prasad,yang}. In particular, Jia and Feng \cite{jia} studied the stability of variants of Weyl type theorems under compact perturbations in connection with the connectedness of the complements of the Weyl and upper semi-Weyl spectra. \par
Now we study the invariance of property $(UW_E)$ under compact perturbations. It follows from \cite[Corollary 3.4]{jia} that if $T \in \mathcal{B(H)}$
and $\lambda \in \text{iso}~
\sigma_w(T)$, then there exists a compact operator $K \in \mathcal{K}(\mathcal{H})$
such that $\lambda \in E(T+K)$. We prove
that property $(UW_E)$ is invariant under compact perturbation if and only
if the set of isolated points of the Weyl spectrum is empty and the complement of the upper semi-Weyl spectrum is connected.
\begin{theorem} \label{pra}
	Let $T\in \mathcal {B(H)}$. Then $T+K$ satisfies property $ (UW_{E})$ for all $K \in \mathcal {K(H)}$ if and only if\\ \rm{(1)} $\rm{iso}$~$\sigma_{w}(T)= \emptyset$\\ \rm{(2)} $\sigma_{uw}(T)$ is simply connected.
\end{theorem}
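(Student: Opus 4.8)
The plan is to express property $(UW_E)$ through the three equivalent conditions of Remark~\ref{r1} and to control each of them under compact perturbations, using that $\sigma_w(\cdot),\sigma_{uw}(\cdot)$ and $\sigma_{lre}(\cdot)$ are invariant under compact perturbations (so conditions (1) and (2) hold for $T$ iff they hold for every $T+K$), together with the standard facts that the index is constant on each component of $\rho_{SF}(T)$, that distinct components of $\rho_{SF}(T)$ are mutually separated since their boundaries lie in $\sigma_{lre}(T)$, and that $\alpha(T-\lambda)$ is constant off a discrete subset of each such component.

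\emph{Necessity.} Assume $T+K$ has $(UW_E)$ for every $K\in\mathcal K(\mathcal H)$. For (1): if $\lambda_0\in\text{iso}\,\sigma_w(T)$, choose by \cite[Corollary 3.4]{jia} a compact $K$ with $\lambda_0\in E(T+K)$; then $(UW_E)$ places $\lambda_0$ in $\sigma_a(T+K)\setminus\sigma_{uw}(T+K)$, and since $\lambda_0$ is isolated in $\sigma(T+K)$ the operator $T+K-\lambda_0$ has index $0$, hence is Weyl and therefore Browder, so $\lambda_0\notin\sigma_w(T+K)=\sigma_w(T)$ — a contradiction. For (2) one first shows $\sigma_{uw}(T)=\sigma_w(T)$: were some $T-\lambda_0$ semi-Fredholm of negative index, its whole $\rho_{SF}$-component would lie in $\sigma_w(T+K)\setminus\sigma_{uw}(T+K)$ for every $K$, hence (Remark~\ref{r1}) in $\sigma(T+K)\setminus\sigma_a(T+K)$, forcing $\alpha(T+K-\lambda_0)=0$ for all $K$; but the rank-one $K_0x=-\langle x,v\rangle(T-\lambda_0)v$, with $(T-\lambda_0)v\neq0$, gives $\alpha(T+K_0-\lambda_0)\geq1$, a contradiction. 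Thus $\mathbb C\setminus\sigma_{uw}(T)=\mathbb C\setminus\sigma_w(T)$, and since $(UW_E)$ for all $K$ entails Weyl's theorem for all $K$, the stability criterion for Weyl's theorem (cf.\ \cite{jia,prasad}) — Weyl's theorem holds for every $T+K$ iff $\text{iso}\,\sigma_w(T)=\emptyset$ and $\mathbb C\setminus\sigma_w(T)$ is connected — yields (2).

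\emph{Sufficiency.} Assume (1) and (2). Condition (2) makes $\mathbb C\setminus\sigma_{uw}(T)$ a single component of $\rho_{SF}(T)$ (a connected union of mutually separated components), and, containing a neighbourhood of $\infty$, it has index $0$; so $\mathbb C\setminus\sigma_{uw}(T)=\rho_{SF}^0(T)=\mathbb C\setminus\sigma_w(T)$, whence $\sigma_{uw}(T)=\sigma_w(T)$ and $\sigma(T)=\sigma_a(T)$ (a point of $\sigma(T)\setminus\sigma_a(T)$ would make $T-\lambda$ bounded below and not surjective, i.e.\ upper semi-Fredholm of negative index). By (1) no isolated point of $\sigma(T)$ lies in $\sigma_w(T)$, so every such point is a Browder point; this gives $E(T)=E_0(T)$, and, since $\alpha(T-\lambda)$ vanishes off a discrete subset of the connected set $\mathbb C\setminus\sigma_w(T)$ (which contains invertible points), also Weyl's theorem $\sigma(T)\setminus\sigma_w(T)=E_0(T)$. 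Remark~\ref{r1} (Weyl's theorem, $\sigma_w(T)\setminus\sigma_{uw}(T)=\emptyset=\sigma(T)\setminus\sigma_a(T)$, $E(T)=E_0(T)$) then gives $(UW_E)$ for $T$; applying this to each $T+K$, which still satisfies (1) and (2), finishes the proof.

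\emph{Main obstacle.} The one nontrivial input is the ``only if'' half of the stability of Weyl's theorem, equivalently: if $\Omega$ is a bounded component of $\mathbb C\setminus\sigma_{uw}(T)=\rho_{SF}^0(T)$, there is a single compact $K$ with $\Omega\subseteq\sigma_p(T+K)$; then $\Omega\subseteq\sigma_a(T+K)\setminus\sigma_{uw}(T+K)$ while $\Omega\cap E(T+K)=\emptyset$, so $(UW_E)$ fails for $T+K$. This ``filling a hole of the upper semi-Weyl spectrum by eigenvalues of a single compact perturbation'' is a Herrero-type fact (of the kind that realises $S^{*}\oplus S$ as a compact perturbation of the bilateral shift); I would quote it in the precise form available in \cite{jia} (or \cite{herr}) and treat the remainder as bookkeeping around Remark~\ref{r1}.
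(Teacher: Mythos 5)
Your proof is correct, and it reaches the theorem by a somewhat different route than the paper. For sufficiency, the paper reduces property $(UW_E)$ for $T+K$ to the two conditions of \cite[Lemma 2.2]{prasad} ($E(T+K)\subseteq\rho_{SF}^-(T+K)\cup\rho_{SF}^0(T+K)$ together with discreteness of $[\rho_{SF}^-(T+K)\cup\rho_{SF}^0(T+K)]\cap\sigma_p(T+K)$), whereas you bypass that lemma entirely: you observe that connectedness of $\mathbb{C}\setminus\sigma_{uw}(T)$ forces it to be the single index-zero component of $\rho_{SF}(T)$, hence $\sigma_{uw}(T)=\sigma_w(T)$ and $\sigma(T)=\sigma_a(T)$, and then verify the three conditions of Remark \ref{r1} (Weyl's theorem via constancy of $\alpha(T-\lambda)$ off a discrete subset of the connected set $\mathbb{C}\setminus\sigma_w(T)$, $E(T)=E_0(T)$ via condition (1)); this is more self-contained and makes explicit the identity $\sigma_{uw}=\sigma_w$ that the paper leaves implicit. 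For necessity, condition (1) is handled essentially as in the paper (both use \cite[Corollary 3.4]{jia}), but for (2) the paper directly fills a bounded component of $\rho_{SF}^-(T)\cup\rho_{SF}^0(T)$ with eigenvalues via \cite[Proposition 3.2]{jia} and contradicts \cite[Lemma 2.2]{prasad}, while you first eliminate $\rho_{SF}^-(T)$ by the nice rank-one perturbation argument through Remark \ref{r1} and then invoke the stability characterization of Weyl's theorem. Two small caveats: in the rank-one step you need $v$ normalized (or a factor $\|v\|^{-2}$) so that $(T+K_0-\lambda_0)v=0$; and the Weyl-stability criterion must be available in its ``if and only if'' form (it is in \cite{jia}), otherwise your fallback in the last paragraph — filling a hole of $\mathbb{C}\setminus\sigma_{uw}(T)$ with point spectrum so that the hole lies in $\sigma_a(T+K)\setminus\sigma_{uw}(T+K)$ but misses $E(T+K)$ — is exactly the paper's argument and closes the gap. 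So the hard perturbation-theoretic inputs are the same as the paper's; what your organization buys is independence from \cite[Lemma 2.2]{prasad} and a transparent spectral-picture reduction, at the cost of quoting the Weyl-theorem stability theorem as a black box.
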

\begin{proof}
    Suppose that $ \text{iso}~ \sigma_{w}(T) =\emptyset$ and $\mathbb{C} \setminus \sigma_{uw}(T)$ is connected. To prove $T+K$ satisfies property $(UW_{E}) $ for all $K \in \mathcal {K(H)}$. Using \cite[Lemma 2.2]{prasad}, it is enough to prove the following conditions: $E(T+K) \subseteq \rho_{SF}^-(T+K) \cup \rho_{SF}^0(T+K)$ and $[\rho_{SF}^-(T+K) \cup \rho_{SF}^0(T+K)]\cap \sigma_{p}(T+K)$. 
    \par To prove $E(T+K) \subseteq \rho_{SF}^-(T+K) \cup \rho_{SF}^0(T+K)$, choose a $\lambda \in E(T+K)$. Then $$\lambda \in \text{iso}~\sigma (T+K) \cap \sigma_p(T+K).$$ This shows that there exist a $\delta >0$ such that $T+K-\mu$ is invertible for $\mu \in B(\lambda, \delta)\setminus \lbrace \lambda \rbrace$. Hence $$B(\lambda, \delta)\setminus \lbrace \lambda \rbrace \subseteq \rho_{SF}^0(T+K).$$ If $\lambda \notin \rho_{SF}^-(T+K) \cup \rho_{SF}^0(T+K)$, then $\lambda \in \sigma_{lre}(T+K)$ or $\lambda \in \rho_{SF}^+(T+K)$. But $\rho_{SF}^+(T+K)$ is an open subset of $\mathbb{C}$ and is contained in the interior of $\sigma(T+K)$, a contradiction. Thus we have $\lambda \in \sigma_{lre}(T+K)$ and hence $\lambda \in \sigma_{w}(T+K)$. In addition, $$B(\lambda, \delta) \cap \sigma_{w}(T+K)=\lbrace \lambda \rbrace.$$ This implies that $\lambda \in \text{iso}~\sigma_{w}(T+K)$, which is a contradiction to our assumption. Therefore, $$E(T+K) \subseteq \rho_{SF}^-(T+K) \cup \rho_{SF}^0(T+K).$$
To prove the second condition, we know that $\sigma_{uw}(T+K)=\sigma_{uw}(T)$.
	Since $\mathbb{C} \setminus \sigma_{uw}(T)$ is connected, it follows that $[\mathbb{C} \setminus \sigma_{uw}(T+K)] \cap \sigma_p(T+K)$ consists of only normal eigenvalues because $\rho_{SF}^0(T+K)$ is not empty. Therefore, \begin{center}
	    $[\rho_{SF}^-(T+K) \cup \rho_{SF}^0(T+K)]\cap \sigma_{p}(T+K)$ is a discrete set.
        \end{center}
	Since $K$ is arbitrary, we conclude that $T+K$ satisfies property $(UW_E)$ for all $K \in \mathcal {K(H)}$.
    \par Conversely, assume that $T+K$ satisfies property $(UW_E)$ for all $K \in \mathcal {K(H)}$. We have to prove that $ \text{iso}~ \sigma_{w}(T) =\emptyset$ and $\mathbb{C} \setminus \sigma_{uw}(T)$ is connected.\\
    If $ \text{iso}~ \sigma_{w}(T) \neq \emptyset$, then there exists $K \in \mathcal{K(H)}$ such that $\lambda \in E(T+K)$, where $\lambda \in  \text{iso}~ \sigma_{lre}(T)$. Since $$\lambda \in \sigma_{w}(T)=\sigma_{w}(T+K),~ \lambda \notin \rho_{SF}(T+K).$$ Therefore, $\lambda \notin \rho_{SF}^-(T+K) \cup \rho_{SF}^0(T+K)$. Hence by \cite[Lemma 2.2]{prasad}, $T+K$ does not satisfy property $(UW_E)$.\\
	If $\mathbb{C} \setminus \sigma_{uw}(T)$ is not connected, that is $\rho_{SF}^-(T) \cup \rho_{SF}^0(T)$ is not connected, then there is a bounded component $\Delta$ of $\rho_{SF}^-(T) \cup \rho_{SF}^0(T)$. By \cite[Proposition 3.2]{jia}, there exist a compact operator $K$ such that the bounded component is contained in the point spectrum of $T+K$. Since $\Delta \subseteq \rho_{SF}^-(T+K)\cup \rho_{SF}^0(T+K)$, $T+K$  does not satisfy property $(UW_E)$ because $\Delta \subseteq [\rho_{SF}^-(T+K)\cup \rho_{SF}^0(T+K)] \cap \sigma_{p}(T+K)$ by \cite[Lemma 2.2]{prasad}.
    \end{proof}
    Coburn \cite{cob} proved that Toeplitz operators on the Hardy space satisfy Weyl's theorem. Extensive studies of Weyl type theorems for Toeplitz operators on the Hardy space can be found in \cite{cob,jia}. The invertibility of Toeplitz operators on the Bergman space with harmonic polynomial symbols was studied by Guan and Zhao \cite{guan}. Guo, Zhao and Zheng \cite{guo} studied the topological behavior of Toeplitz operators on the Bergman space with harmonic polynomial symbols and proved that if $q$ is in the disc algebra $H^\infty \cap C(\overline{\mathbb{D}})$ and $h(z)=\overline{z}+q(z)$,
then $T_h$ satisfies Weyl's theorem,where $H^{\infty}$ is the algebra of all bounded analytic functions on $\mathbb{D}$. They also found an example of a non-hyponormal Toeplitz operator on
the Bergman space that satisfies Weyl's theorem.
\par Let $\mathbb{D} \subset \mathbb{C}$ be the open unit disk and $dA(z)$ be the Lebesgue area measure on $\mathbb{D}$, which is normalized so that the measure of $\mathbb{D}$ is one. $L^\infty(\mathbb{D})$ is the space of bounded measurable functions on $\mathbb{D}$.
The \textit{Bergman space} $A^2(\mathbb{D})$ is the closed subspace of $L^2(\mathbb{D})$ consisting of functions that are analytic on $\mathbb{D}$, where $L^2(\mathbb{D})$ is the space of all square integrable functions on $\mathbb{D}$. Let ${P}$ from  $L^2(\mathbb{D})$ onto $A^2(\mathbb{D})$ be an orthogonal projection. For $\phi \in L^\infty (\mathbb{D})$, the Toeplitz operator $T_\phi$ on the Bergman space is given by
\begin{center}
	$T_\phi(f)= {P}( \phi f)$
\end{center} for $f$ in the Bergman space $A^2(\mathbb{D})$.\\The \textit{Harmonic Bergman space}, $L^2_h(\mathbb{D})$, is the closed subspace of $L^2(\mathbb{D})$ consisting of all complex-valued harmonic functions on $\mathbb{D}$. For $\phi \in L^\infty(\mathbb{D})$, the Toeplitz operator $\tilde{T_\phi}$ on the harmonic Bergman space is given by
\begin{center}
	$\tilde{T_\phi}(f)= Q(\phi f )$,
\end{center} where $Q$ is the orthogonal projection from $L^2(\mathbb{D})$ to $L^2_h(\mathbb{D})$ and $f \in L^2_h(\mathbb{D})$.

\begin{lemma}\cite{stroe,sundberg}\label{su}
Suppose that $\phi \in C(\overline{\mathbb{D}})$. Then the essential spectrum of $T_\phi$ is given by $\sigma_e(T_\phi) = \phi(\partial \mathbb{D})$.
		Moreover, if $T_\phi$ is a Fredholm operator, then the Fredholm index of $T_\phi$ is given by
		\begin{center}
			$ind(T_\phi) = \alpha(T_\phi) - \beta(T_\phi) = -\operatorname{wind} (\phi(\partial \mathbb{D}), 0)$,
		\end{center}
		where $\operatorname{wind} (\phi(\partial \mathbb{D}), 0)$ is the winding number of the closed oriented curve $\phi(\partial \mathbb{D})$ with respect to the origin, which is defined by
		\begin{align*}
			\operatorname{wind} (\phi(\partial \mathbb{D}), 0)= \frac{1}{2 \pi i} \int_{\phi(\partial \mathbb{D})} \frac{dz}{z}.
		\end{align*}
        \end{lemma}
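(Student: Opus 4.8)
The plan is to read $\sigma_e(T_\phi)$ off the image of $T_\phi$ in the Calkin algebra $\mathcal{B(H)}/\mathcal{K(H)}$, exploiting the structure of the Toeplitz $C^*$-algebra with symbols in $C(\overline{\mathbb{D}})$. Write $K(z,w)=(1-z\bar w)^{-2}$ for the Bergman kernel of $A^2(\mathbb{D})$, let $k_z=K(\cdot,z)/\sqrt{K(z,z)}$ be the normalized reproducing kernels, and for $\psi\in L^\infty(\mathbb{D})$ let $H_\psi\colon A^2(\mathbb{D})\to L^2(\mathbb{D})\ominus A^2(\mathbb{D})$, $H_\psi f=(I-P)(\psi f)$, be the Hankel operator with symbol $\psi$. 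Everything will rest on two facts: \textbf{(i)} for $\phi\in C(\overline{\mathbb{D}})$, $T_\phi\in\mathcal{K(H)}$ if and only if $\phi\equiv0$ on $\partial\mathbb{D}$; and \textbf{(ii)} $H_\psi$ is compact for every $\psi\in C(\overline{\mathbb{D}})$, so that, via the identity $T_{\phi\psi}-T_\phi T_\psi=PM_\phi H_\psi$ (with $M_\phi$ multiplication by $\phi$; this follows from $P(\psi f)=\psi f-(I-P)(\psi f)$), the semicommutator $T_{\phi\psi}-T_\phi T_\psi$ is compact for all $\phi,\psi\in C(\overline{\mathbb{D}})$.

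To prove \textbf{(i)}, I would first observe that when $\phi$ is supported in $\overline{r\mathbb{D}}$ with $r<1$, the operator $T_\phi$ has integral kernel $\phi(w)K(z,w)$ and is Hilbert--Schmidt, since
\[
\iint_{\mathbb{D}\times\mathbb{D}}|\phi(w)|^2\,|K(z,w)|^2\,dA(z)\,dA(w)=\int_{\mathbb{D}}|\phi(w)|^2\,K(w,w)\,dA(w)<\infty,
\]
because $K(w,w)=(1-|w|^2)^{-2}$ is bounded on $\overline{r\mathbb{D}}$; a general $\phi$ vanishing on $\partial\mathbb{D}$ is a uniform limit of such compactly supported functions and $\|T_\phi\|\le\|\phi\|_\infty$, hence $T_\phi$ is compact. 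Conversely, if $T_\phi$ is compact then its Berezin transform $\widetilde{\phi}(z)=\langle T_\phi k_z,k_z\rangle$ tends to $0$ as $|z|\to1$ (since $k_z\to0$ weakly), while for continuous $\phi$ one has $\widetilde{\phi}(z)\to\phi(\zeta)$ as $z\to\zeta\in\partial\mathbb{D}$ by the concentration of the probability measures $|k_z(w)|^2\,dA(w)$; hence $\phi|_{\partial\mathbb{D}}\equiv0$. For \textbf{(ii)}, note $H_\psi=0$ when $\psi\in H^\infty$; by Stone--Weierstrass it suffices to treat $\psi$ a polynomial in $z$ and $\bar z$, and using $H_{z^j\bar z^k}=H_{\bar z^k}M_{z^j}$ one reduces further to the $H_{\bar z^k}$; a direct computation on the orthogonal basis $\{z^n\}_{n\ge0}$ then gives $\sum_{n}\|H_{\bar z^k}(z^n/\|z^n\|)\|^2<\infty$, so $H_{\bar z^k}$ is Hilbert--Schmidt.

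Granting \textbf{(i)} and \textbf{(ii)}, the map $\phi\mapsto\pi(T_\phi)$, where $\pi$ is the quotient map onto the Calkin algebra, is a unital $*$-homomorphism from $C(\overline{\mathbb{D}})$ into $\mathcal{B(H)}/\mathcal{K(H)}$ (multiplicative modulo compacts by (ii), and $*$-preserving since $T_\phi^*=T_{\bar\phi}$), whose kernel is, by (i), exactly the ideal of functions vanishing on $\partial\mathbb{D}$; it therefore induces a unital, isometric $*$-embedding of $C(\partial\mathbb{D})$ into the Calkin algebra carrying $\phi|_{\partial\mathbb{D}}$ to $\pi(T_\phi)$. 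By spectral permanence for $C^*$-algebras, the spectrum of $\pi(T_\phi)$ in this $C^*$-subalgebra coincides with its spectrum in the full Calkin algebra, whence $\sigma_e(T_\phi)=\sigma\big(\pi(T_\phi)\big)=\operatorname{ran}\big(\phi|_{\partial\mathbb{D}}\big)=\phi(\partial\mathbb{D})$. (One can bypass spectral permanence: for $\lambda\notin\phi(\partial\mathbb{D})$, picking $\psi\in C(\overline{\mathbb{D}})$ equal to $(\phi-\lambda)^{-1}$ on a neighbourhood of $\partial\mathbb{D}$ makes $T_\psi$ a two-sided Fredholm inverse of $T_\phi-\lambda$ by (i)--(ii); for $\lambda=\phi(\zeta_0)$, $\zeta_0\in\partial\mathbb{D}$, the vectors $k_{z_n}$ with $z_n\to\zeta_0$ satisfy $\|(T_\phi-\lambda)k_{z_n}\|^2\le\int|\phi-\lambda|^2|k_{z_n}|^2\,dA\to0$ with $k_{z_n}\to0$ weakly, so $T_\phi-\lambda$ is not Fredholm.)

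Finally, suppose $T_\phi$ is Fredholm, so $\phi|_{\partial\mathbb{D}}$ is nonvanishing on $\partial\mathbb{D}$ with a well-defined winding number $m=\operatorname{wind}(\phi(\partial\mathbb{D}),0)$. Since homotopy classes of nonvanishing continuous functions on $\partial\mathbb{D}$ are classified by winding number, there is a homotopy $g_t$ through such functions with $g_0=\phi|_{\partial\mathbb{D}}$ and $g_1\colon\zeta\mapsto\zeta^m$; extending each $g_t$ continuously to $G_t\in C(\overline{\mathbb{D}})$ (for instance by harmonic extension) with $t\mapsto G_t$ norm-continuous, each $T_{G_t}$ is Fredholm by the part already proved, so $\operatorname{ind}(T_\phi)=\operatorname{ind}(T_{G_0})=\operatorname{ind}(T_{G_1})$, the first equality because $G_0-\phi$ vanishes on $\partial\mathbb{D}$ and hence $T_{G_0}-T_\phi$ is compact. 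As $T_z$ is injective with closed range $\{g\in A^2(\mathbb{D}):g(0)=0\}$ of codimension one, $\operatorname{ind}(T_z)=-1$, giving $\operatorname{ind}(T_{z^m})=-m$ for $m\ge0$ and $\operatorname{ind}(T_{\bar z^{|m|}})=-m$ for $m<0$; in either case $\operatorname{ind}(T_{G_1})=-m$, so $\operatorname{ind}(T_\phi)=-m=-\operatorname{wind}(\phi(\partial\mathbb{D}),0)$. The step I expect to be the main obstacle is \textbf{(i)}--\textbf{(ii)}, and especially the compactness of Hankel operators with continuous symbol: this is precisely what makes the Calkin image commutative and collapses the essential spectrum onto the boundary circle. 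Once that is secured, the identification of $\sigma_e$ is routine $C^*$-algebra theory and the index formula is a homotopy argument anchored at the model operator $T_z$; both ingredients are, of course, available in the cited literature on Bergman-space Toeplitz and Hankel operators.
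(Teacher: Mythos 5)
The paper does not prove this lemma at all: it is imported verbatim from the cited literature (Stroethoff--Zheng and Sundberg--Zheng; the index statement also appears in Axler--Conway--McDonald), so there is no internal argument to compare against. Your reconstruction is correct and is essentially the standard proof from those sources: compactness of $T_\phi$ for symbols vanishing on $\partial\mathbb{D}$ (Hilbert--Schmidt kernel estimate plus the Berezin-transform converse) together with compactness of Hankel operators $H_\psi$, $\psi\in C(\overline{\mathbb{D}})$, makes $\phi\mapsto\pi(T_\phi)$ a $*$-homomorphism with kernel the ideal of functions vanishing on the boundary, and spectral permanence (or your explicit parametrix/reproducing-kernel bypass) gives $\sigma_e(T_\phi)=\phi(\partial\mathbb{D})$; the index then follows by homotopy invariance, compact perturbation by $T_{G_0-\phi}$, and the model computation $\operatorname{ind}(T_z)=-1$, with the case $m<0$ handled via $T_{\bar z^{|m|}}=(T_{z^{|m|}})^*$. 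The details you leave implicit are all routine and correct: $\|H_\psi\|\le\|\psi\|_\infty$ so Stone--Weierstrass suffices, the Hilbert--Schmidt computation for $H_{\bar z^k}$ works out (e.g.\ $\|H_{\bar z}e_n\|^2=\tfrac{1}{(n+1)(n+2)}$), $T_z$ is bounded below so its range is closed of codimension one, and the weakly-null normalized sequence $k_{z_n}$ with $\|(T_\phi-\lambda)k_{z_n}\|\to0$ indeed rules out semi-Fredholmness. In short, your proof is sound and self-contained; it simply supplies the argument the paper delegates to its references.
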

        An interesting observation from \cite[Proposition 8]{axler} is that the self-commutator $[T_\phi, T_g]=T_\phi T_g - T_gT_\phi$ is a compact operator for functions $\phi,g \in C(\overline{\mathbb{D}})$.
	Since  $\phi \in C(\overline{\mathbb{D}})$, we have $\overline{\phi} \in C(\overline{\mathbb{D}})$ and $[T_\phi, T_{\overline{\phi}}]=T_\phi T_{\overline{\phi}} - T_{\overline{\phi}}T_\phi$ is compact.
  Thus, $T_\phi$ is essentially normal because $T_\phi^*=T_{\overline{\phi}}$. Then  we have, $\sigma_{e}(T_{\phi})=\sigma_{lre}(T_{\phi}).$  Therefore, by Lemma \ref{su},
	\begin{center}
		$\sigma_{e}(T_{\phi})=\sigma_{lre}(T_{\phi})=\phi(\partial \mathbb{D})$.
	\end{center}
    Then the Weyl spectrum and upper semi-Weyl spectrum of $T_\phi$ can be written as,
\begin{center}
	$\sigma_{w}(T_\phi)=\sigma_{lre}(T_\phi) \cup \rho_{SF}^+(T_\phi) \cup \rho_{SF}^-(T_\phi) 
	\vspace{.2cm}
	= \phi(\partial \mathbb{D}) \cup \lbrace \lambda \in \mathbb{C}: \operatorname{wind} (\phi(\partial \mathbb{D}), \lambda)<0 \rbrace \cup \lbrace \lambda \in \mathbb{C}: \operatorname{wind} (\phi(\partial \mathbb{D}), \lambda)>0 \rbrace.$
		\end{center}
	
and
\begin{equation*}
\begin{split}	\sigma_{wa}(T_\phi)&= \sigma_{lre}(T_\phi) \cup \rho_{SF}^+(T_\phi)\\&=\phi(\partial \mathbb{D}) \cup \lbrace \lambda \in \mathbb{C}: \operatorname{wind} (\phi(\partial \mathbb{D}), \lambda)<0 \rbrace.
\end{split}
	\end{equation*}
    \begin{theorem}\label{uw}
Let $\phi \in C(\overline{\mathbb{D}})$. Then
\begin{enumerate}
\item   $T_\phi + K$ satisfies property $ (UW_E)$ for all $K \in \mathcal{K}(A^2(\mathbb{D}))$ if and only if $\phi$ is not constant on $\partial \mathbb{D}$ and the winding number of $\phi$ with respect to each hole of $\phi(\partial \mathbb{D})$ is negative.
\item  $ T_\phi + K$ satisfies Weyl's theorem for all $K \in \mathcal{K}(A^2(\mathbb{D}))$ if and
only if $\phi$ is not constant on $\partial \mathbb{D}$ and the winding number of $\phi$ with respect to each hole of $\phi(\partial \mathbb{D})$ is nonzero.\\
$T_\phi + K$ satisfies $ a$-Weyl's theorem for all $K \in \mathcal{K}(A^2(\mathbb{D}))$ if and
only if $\phi$ is not constant on $\partial \mathbb{D}$ and the winding number of $\phi$ with respect to each hole of $\phi(\partial \mathbb{D})$ is negative.\\
$T_\phi + K$ satisfies Browder's theorem for all $K \in \mathcal{K}(A^2(\mathbb{D}))$ if and
only if the winding number of $\phi$ with respect to each hole of $\phi(\partial \mathbb{D})$ is nonzero.\\
$T_\phi + K$ satisfies $a$-Browder's theorem for all $K \in \mathcal{K}(A^2(\mathbb{D}))$ if and
only if the winding number of $\phi$ with respect to each hole of $\phi(\partial \mathbb{D})$ is negative.
\end{enumerate}
\end{theorem}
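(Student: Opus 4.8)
The strategy is to convert each of the six stability statements into a pair of purely topological conditions on the planar compact sets $\sigma_w(T_\phi)$ and $\sigma_{uw}(T_\phi)=\sigma_{wa}(T_\phi)$, and then to decode those conditions from the winding‑number descriptions recorded just before the theorem. Write $\mathbb{C}\setminus\phi(\partial\mathbb{D})=U\cup\bigcup_{j}H_j$, where $U$ is the (non‑empty) unbounded component, on which $\operatorname{wind}(\phi(\partial\mathbb{D}),\cdot)=0$, and the $H_j$ are the bounded components, the ``holes'' of $\phi(\partial\mathbb{D})$, on each of which the winding number is a constant integer. Since $T_\phi$ is essentially normal, $\sigma_{lre}(T_\phi)=\phi(\partial\mathbb{D})$, and by Lemma \ref{su} and the computation following it, $\sigma_w(T_\phi)$ is obtained from $\phi(\partial\mathbb{D})$ by adjoining exactly the holes $H_j$ with $\operatorname{wind}(\phi(\partial\mathbb{D}),H_j)\neq 0$, while $\sigma_{uw}(T_\phi)$ is obtained by adjoining exactly the holes with $\operatorname{wind}(\phi(\partial\mathbb{D}),H_j)<0$; in particular the complement of each of these sets consists of $U$ together with the omitted holes.

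Two elementary observations will do most of the work. First, $\operatorname{iso}\sigma_w(T_\phi)=\operatorname{iso}\sigma_{uw}(T_\phi)=\emptyset$ precisely when $\phi$ is non‑constant on $\partial\mathbb{D}$: if $\phi$ is constant on $\partial\mathbb{D}$ then $\phi(\partial\mathbb{D})$ is a single point, there are no holes, and both sets equal that isolated point; if $\phi$ is non‑constant on $\partial\mathbb{D}$ then $\phi(\partial\mathbb{D})$ is a continuum with at least two points, hence has no isolated point, and adjoining open sets cannot create one. Second, the complement of $\sigma_w(T_\phi)$ (respectively of $\sigma_{uw}(T_\phi)$) is connected precisely when every hole $H_j$ satisfies $\operatorname{wind}(\phi(\partial\mathbb{D}),H_j)\neq 0$ (respectively $<0$): if every hole is adjoined the complement is exactly the connected set $U$; conversely, any omitted hole is an open connected set whose boundary lies in $\phi(\partial\mathbb{D})$ and hence in the set being removed, so it is relatively clopen in the complement and forms a component distinct from $U$, forcing disconnectedness.

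Now (1) follows at once from Theorem \ref{pra}: $T_\phi+K$ has property $(UW_E)$ for every $K$ iff $\operatorname{iso}\sigma_w(T_\phi)=\emptyset$ and $\sigma_{uw}(T_\phi)$ is simply connected, i.e.\ $\mathbb{C}\setminus\sigma_{uw}(T_\phi)$ is connected, which by the two observations is exactly the conjunction ``$\phi$ non‑constant on $\partial\mathbb{D}$'' and ``every hole has negative winding number''. For (2) one invokes the corresponding compact‑perturbation stability criteria of Jia and Feng \cite{jia}: $T+K$ satisfies Weyl's theorem (respectively $a$‑Weyl's theorem) for all $K$ iff $\operatorname{iso}\sigma_w(T)=\emptyset$ (respectively $\operatorname{iso}\sigma_{wa}(T)=\emptyset$) together with connectedness of $\mathbb{C}\setminus\sigma_w(T)$ (respectively $\mathbb{C}\setminus\sigma_{wa}(T)$), while $T+K$ satisfies Browder's theorem (respectively $a$‑Browder's theorem) for all $K$ iff $\mathbb{C}\setminus\sigma_w(T)$ (respectively $\mathbb{C}\setminus\sigma_{wa}(T)$) is connected. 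Feeding the two observations into these criteria with $T=T_\phi$, so that connectedness of $\mathbb{C}\setminus\sigma_w(T_\phi)$ reads as ``every hole has nonzero winding number'' and connectedness of $\mathbb{C}\setminus\sigma_{wa}(T_\phi)$ as ``every hole has negative winding number'', yields the five assertions in (2).

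The index‑and‑winding bookkeeping is routine and has been supplied before the theorem, so the main obstacle is the second observation, namely the planar‑topology argument that each omitted hole of $\phi(\partial\mathbb{D})$ is genuinely a separate component of $\mathbb{C}\setminus\sigma_w(T_\phi)$ (and of $\mathbb{C}\setminus\sigma_{uw}(T_\phi)$); this rests on the fact that a bounded complementary component of a compact set has its boundary inside that set, which is what keeps the hole sealed off from $U$ once $\phi(\partial\mathbb{D})$ has been included in the spectrum. One must also dispose of the degenerate cases (no holes, or $\phi(\partial\mathbb{D})$ reduced to a point) cleanly and make sure that the stability criteria used in (2) are applied with the right spectrum, $\sigma_w$ for the Weyl and Browder versions and $\sigma_{wa}$ for the approximate versions.
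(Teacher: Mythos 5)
Your proposal is correct and follows essentially the same route as the paper: compute $\sigma_{lre}(T_\phi)=\phi(\partial\mathbb{D})$ and the index--winding relation, translate the emptiness of $\operatorname{iso}\sigma_w(T_\phi)$ and the connectedness of the complements of $\sigma_w(T_\phi)$, $\sigma_{uw}(T_\phi)$ into non-constancy of $\phi$ on $\partial\mathbb{D}$ and winding-number conditions on the holes, then apply Theorem \ref{pra} for property $(UW_E)$ and the Jia--Feng compact-perturbation criteria for the remaining five statements. The only (harmless) deviation is that you verify $\operatorname{iso}\sigma_w(T_\phi)=\emptyset\iff\phi$ non-constant on $\partial\mathbb{D}$ by a direct continuum argument, and spell out the clopen-hole argument for connectedness, where the paper instead cites \cite[Lemma 2.12]{jia} and asserts the topology without detail.
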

\begin{proof}
\begin{enumerate}
\item We know that $\sigma_{uw}(T_\phi)$ consists of $\phi(\partial \mathbb{D})$ and those holes with respect to which the winding number of $\phi$ is negative. Then $\mathbb{C} \setminus \sigma_{uw}(T_\phi)$ is connected if and only if the winding
number of $\phi$ with respect to each hole of $\phi(\partial \mathbb{D})$ is negative.
We claim: 
\begin{center}
$\text{iso}~\sigma_{w}(T_\phi)=\emptyset$ if and only if $\phi$ is non-constant on $\partial \mathbb{D}$.
\end{center}  Let $\text{iso}~\sigma_{w}(T_\phi)\neq \emptyset$. Then by \cite[Lemma 2.12]{jia} it follows that $\text{iso}~\sigma_{lre}(T_\phi)\neq \emptyset$. Since 
\begin{center}
 $\sigma_{e}(T_{\phi})=\sigma_{lre}(T_{\phi })=\phi(\partial \mathbb{D})$ is connected, $\phi$ is constant on $\partial \mathbb{D}$.
\end{center}
If $\phi \equiv \lambda$, then $\sigma(T_\phi)=\sigma_{w}(T_\phi)=\lbrace \lambda \rbrace$. Hence, if $\phi$ is constant on $\partial\mathbb{D}$, then $\text{iso}~ \sigma_{w}(T_\phi) \neq \emptyset$. Then from Theorem \ref{pra}, the result follows. 
\item It can be seen in \cite[Theorem 1.2]{jia} that $ T_\phi + K$ satisfies Weyl's theorem for all $K \in \mathcal {K(H)}$ if
\begin{center}
    $\text{iso}~ \sigma_w(T_\phi)=\emptyset$ and $\mathbb{C} \setminus \sigma_{w}(T_\phi)$ is connected.
    \end{center}We have $\sigma_{w}(T_\phi)$ consists of $\phi(\partial \mathbb{D})$ and those holes with respect to which the winding number of $\phi$ is nonzero. Thus, $\mathbb{C} \setminus \sigma_{w}(T_\phi)$ is connected if and only if the winding number of $\phi$ with respect to each hole of $\phi(\partial \mathbb{D})$ is nonzero. By a similar argument as in condition (a),  $\text{iso}~ \sigma_w(T_\phi)= \emptyset$ if and only if $\phi$ is not constant on $\partial \mathbb{D}$. Similarly, using the above arguments and from \cite{jia}, we can also prove the other implications.
\end{enumerate}
\end{proof} 
\begin{remark}\normalfont
 If  we take $\phi \in H^\infty \cap C(\overline{\mathbb{D}})$ and $\phi$ has no zeros in $\mathbb{D}$, then by the maximum and minimum modulus principles for analytic functions, $\phi$ is non-constant on $\mathbb{D}$. Therefore, in that case, we can modify Theorem \ref{uw} by adding the condition that $\phi$ is not constant on $\mathbb{D}$. That is, $T_\phi + K$ satisfies property $ (UW_E)$ for all $K \in \mathcal{K}(A^2(\mathbb{D}))$ if and only if $\phi$ is non-constant on $\mathbb{D}$ and the winding number of $\phi$ with respect to each hole of $\phi(\partial \mathbb{D})$ is negative.
\end{remark}
\par The following is a class of Toeplitz operators on the Bergman space that do not satisfy property $(UW_E)$.
\begin{example}\label{ex29}
	\normalfont
	Let $J>1$ and $h(re^{i\theta})=\chi_{{\frac{1}{J}}\mathbb{D}}(re^{i\theta}) e^{-i \theta}$, where $z=re^{i\theta} \in \mathbb{D}$ and $\frac{1}{J}\mathbb{D}= \lbrace z \in \mathbb{C}: |z|<\frac{1}{J}\rbrace$. Since $\lbrace e_n(z)\rbrace_{n=0}^{\infty} =\lbrace \sqrt{n+1}z^n\rbrace_{n=0}^{\infty}$ is an orthonormal basis of the Bergman space $A^2(\mathbb{D})$, we can express $T_h$ with respect to this orthonormal basis as 
	\begin{align*}
		\begin{split}
			T_h(e_n)(z) &= \sum_{k=1}^{\infty} \langle T_h(e_n(z)), e_k(z) \rangle e_k(z)\\
			&= \sum_{k=1}^{\infty} \langle h(z)e_n(z), e_k(z) \rangle e_k(z)\\
			&= \sum_{k=1}^{\infty}\left[ \int_{\mathbb{D}} h(z)e_n(z) \overline{e_k(z)} dA(z)\right] e_k(z)\\
			&=\sum_{k=1}^{\infty} \left[\frac{1}{\pi}\int_0^{2\pi}\int_0^{\frac{1}{J}}r^{n+k+1}e^{i(n-k-1)\theta} dr d\theta \right]e_k(z)\\
			&= \frac{\sqrt{n(n+1)}}{2n+1}\left(\frac{1}{J}\right)^{2n} e_{n-1}(z).
		\end{split}
	\end{align*}
	Thus, the Toeplitz operator with symbol $h$ on the space $A^2(\mathbb{D})$ is
	\begin{align*}
		T_h(e_n)(z) = 
		\begin{cases}
			0 & \text{if } n=0\\
			\frac{\sqrt{n(n+1)}}{2n+1}\left(\frac{1}{J}\right)^{2n} e_{n-1}(z) & \text{if } n\geq 1.
		\end{cases} 	
	\end{align*}
	Since 
	\begin{align*}
		\lim_{n \rightarrow \infty} \frac{\sqrt{n(n+1)}}{2n+1}\left(\frac{1}{J}\right)^{2n} = 0,
	\end{align*} $T_h$ is compact backward weighted shift operator. From \cite[Proposition 27.7]{con}, we get $$\sigma(T_h)=\sigma_a(T_h)=\sigma_{uw}(T_h)=\pi_0(T_h)=\lbrace0\rbrace.$$ Since $\sigma_a(T_h),\sigma_{uw}(T_h)$, and $\pi_0(T_h)$ have elements in common, the Bergman-Toeplitz operator $T_h$ with $h(re^{i\theta})=\chi_{{\frac{1}{J}}\mathbb{D}}(re^{i\theta}) e^{-i \theta}$, $J>1$ does not satisfy property $(UW_E)$.
\end{example}
\begin{remark}
	\normalfont
	\begin{enumerate}
		\item[(a)] From Example \ref{ex29}, it is evident that $T_h$ does not satisfy Weyl's theorem and hence $T_h$ does not satisfy spectral properties that are stronger variants of Weyl's theorem (see \cite{sana}).
		\item[(b)] If $p(z)$ is a polynomial of degree less than 3 and $T_{\overline{z}+p(z)} \in \mathcal{HP}\cup\mathcal{SP}$ on the Bergman space, then it is easy to see that $T_{\overline{z}+p(z)}$ satisfies property $(UW_E)$ by Theorem \ref{th5} and \cite[Theorem 3.1]{guo}.
	\end{enumerate}
\end{remark}
 Suppose that $q$ is in the disk algebra $H^\infty \cap C(\overline{\mathbb{D}})$ and let $h(z) = \overline{z} + q(z)$. From \cite[Theorem 5.2]{guo}, it follows that Weyl's theorem holds for the Bergman-Toeplitz operator $T_h$. The spectrum of $T_h$ is given by 
\begin{center}
	$\sigma(T_h)= h(\partial \mathbb{D}) \cup \lbrace \lambda \in \mathbb{C}: \lambda \notin h(\partial \mathbb{D}) \text{ and } \operatorname{wind}(h(\partial \mathbb{D}), \lambda) \neq 0 \rbrace \cup \Delta$,
\end{center}
	where $\Delta$ is a subset of set of eigenvalues of $T_h$ given by
	\begin{center}
		$\Delta = \sigma_p(T_h) \cap \lbrace \lambda \in \mathbb{C}: \lambda \notin \sigma_{e}( T_h) ~\text{and} ~ind(T_h- \lambda I) = 0 \rbrace$.
	\end{center}
	The Weyl spectrum and upper semi-Weyl spectrum are given by
	\begin{align*}
		\sigma_w(T_h)= h(\partial \mathbb{D}) \cup \lbrace \lambda \in \mathbb{C}: \lambda \notin h(\partial \mathbb{D}) \rm~\text{and}~ \operatorname{wind}(h(\partial \mathbb{D}), \lambda) \neq 0 \rbrace,~ \text{and}\\
		\sigma_{uw}(T_h)= h(\partial \mathbb{D}) \cup \lbrace \lambda \in \mathbb{C}: \lambda \notin h(\partial \mathbb{D}) \rm~\text{and}~ \operatorname{wind} (h(\partial \mathbb{D}), \lambda) < 0 \rbrace.
	\end{align*}
	All isolated points of $\sigma(T_h)$ are contained in $\Delta$. Thus, $\pi_0(T_h)\subseteq \Delta$ and $\Delta \subseteq \pi_{00}(T_h) \subseteq \pi_0(T_h)$ and so $\pi_0(T_h)= \Delta$ (see \cite [Theorem 5.2] {guo}).
	Then the spectrum of $T_h$ can be expressed as
	\begin{center}
		$\sigma(T_h)=\sigma_{uw}(T_h) \cup \lbrace \lambda \in \mathbb{C}: \lambda \notin h(\partial \mathbb{D}) \rm~\text{and} \rm~\operatorname{wind} $$(h(\partial \mathbb{D}), \lambda) > 0 \rbrace \cup \pi_0(T_h) $.
	\end{center}
	\par Some examples of Toeplitz operators that satisfy property $(UW_E)$ on the Bergman space is given as follows.
	\begin{example}\label{cho}
		\normalfont
		If we take $q(z)$ in the above note as zero function, then $h(z)= \overline{z}$. Hence the operator $T_h$ is given by 
		\begin{align*}
			T_h(e_n)(z) = 
			\begin{cases}
				0 & \text{if } n=0\\
				\sqrt{\frac{n}{n+1}} e_{n-1}(z) & \text{if } n\geq 1
			\end{cases}, 
		\end{align*}
		where $\lbrace e_n(z)\rbrace_{n=0}^{\infty} =\lbrace \sqrt{n+1}z^n\rbrace_{n=0}^{\infty}$ is an orthonormal basis of the Bergman space. Since $h(\partial \mathbb{D})$ has orientation in the clockwise direction, we have 
		\begin{center}
			$	\lbrace \lambda \in \mathbb{C}: \lambda \notin h(\partial \mathbb{D}) \rm~\text{and} \rm~\operatorname{wind} (h(\partial \mathbb{D}), \lambda) > 0 \rbrace=\emptyset$.
		\end{center}
		This shows that 
		\begin{center}
			$\sigma(T_h)=\sigma_{uw}(T_h) \cup \pi_0(T_h) $.
		\end{center}
		Thus, $T_h$ satisfies property $(V_E)$ and therefore $T_h$ satisfies the property ($UW_E$). In addition, $\rm{iso}~$$\sigma_{w}(T_h)=\rm{iso}~$$\sigma_{uw}(T_h)=\emptyset$. By \cite[Lemma 2.3]{paiena}, $T_h$ is polaroid and $a$-isoloid.
	\end{example}
	
	\begin{example}
		\normalfont
		Let $q(z)= \frac{z^2}{3}$. Then $h(z)= \overline{z}+ \frac{z^2}{3}$. If we take $f(z)= \frac{z^2}{3}$ and $g(z)=z$, we get $f'(1)=\frac{2}{3}$ and $g'(1)=1$. Thus, $\left| f'(1)\right| < \left| g'(1)\right|$. We know that if $T_{f+\overline{g}}$ is hyponormal, then $\left| f'(z)\right| \geq \left| g'(z)\right|$ for all $z \in \partial \mathbb{D}$ \cite{guo}. Then $T_h$ is not hyponormal. Since $h(\partial \mathbb{D})$ trace the curve in the clockwise direction, we have 
		\begin{center}
			$	\lbrace \lambda \in \mathbb{C}: \lambda \notin h(\partial \mathbb{D}) \rm~\text{and} \rm~\operatorname{wind} (h(\partial \mathbb{D}), \lambda) > 0 \rbrace=\emptyset$.
		\end{center}
		Hence
		\begin{center}
			$\sigma(T_h)=\sigma_{uw}(T_h) \cup \pi_0(T_h) $.
		\end{center} Thus, $T_h$ satisfies property $(V_E)$. That is, there exist a non-hyponormal operator on the Bergman space that satisfies property $(V_E)$ and so does property $(UW_E)$. We have $ h(\partial \mathbb{D}) \cup \lbrace \lambda \in \mathbb{C}: \lambda \notin h(\partial \mathbb{D}) \rm~\text{and} \rm~\operatorname{wind} (h(\partial \mathbb{D}), \lambda) \neq 0 \rbrace$ and $h(\partial \mathbb{D}) \cup \lbrace \lambda \in \mathbb{C}: \lambda \notin h(\partial \mathbb{D}) \rm~\text{and} \rm~\operatorname{wind} (h(\partial \mathbb{D}), \lambda) < 0 \rbrace$ are connected. Thus, $\rm{iso}~$$\sigma_{w}(T_h)=\rm{iso}~$$\sigma_{uw}(T_h)=\emptyset$. Then, $T_h$ is polaroid and $a$-isoloid.
	\end{example}
	In general $T_h$, where $h(z)= \overline{z}+\frac{z^{n-1}}{n}$ and $n$ is a positive integer satisfies property $(UW_E)$ and $(V_E)$.
	If we take $q(z)\in   H^ \infty \cap C(\overline{\mathbb{D}})$ and $h(z)=\overline{z}+q(z)$, we have  
	\begin{center}
		$	\lbrace \lambda \in \mathbb{C}: \lambda \notin h(\partial \mathbb{D}) \rm~\text{and} \rm~\operatorname{wind} (h(\partial \mathbb{D}), \lambda) < 0 \rbrace \neq \emptyset$.
	\end{center}
	Then by \cite[Corollary 2.106]{aiena}, $T_h$ cannot have SVEP. Consequently, $T_h$ is not a class $A$ operator by \cite[Lemma 2.5]{mec}. 
	\par 	The following example demonstrates that property $(UW_E)$ can be satisfied by a non-hyponormal operator on the harmonic Bergman space.
	\begin{example}
		\normalfont
		Let $p(z)=z^n, n\geq 1$.
		Since $z^n \in C(\overline{\mathbb{D}})$ and from \cite[Lemma 2.1]{wang}, $$\sigma_e(\tilde{T_p})=p(\partial \mathbb{D})=\lbrace \lambda \in \mathbb{C}: |z|=1\rbrace.$$ We know that
		$\sigma_{w}(\tilde{T_p})=p(\partial \mathbb{D}) \cup \lbrace \lambda \in \mathbb{C}: \lambda \notin p(\partial \mathbb{D})~ \text{and}~ind(\tilde{T_p}) \neq 0 \rbrace$. Since $\tilde{T_p}$ is Fredholm and again by \cite[Lemma 2.1]{wang}, $ind(\tilde{T_p})=0$. Thus, $\sigma_{w}(\tilde{T_p})=\sigma_{e}(\tilde{T_p})=p(\partial \mathbb{D}).$ Thus, $\lbrace \lambda \in \mathbb{C}: \lambda \notin p(\partial \mathbb{D})~\text{and}~ ind(\tilde{T_p})\neq 0 \rbrace= \emptyset$. Therefore, $\sigma_{uw}(\tilde{T_p})=\sigma_{w}(\tilde{T_p})=p(\partial \mathbb{D})$ and
		
		\begin{center}
			$\sigma(\tilde{T_p})=p(\partial \mathbb{D}) \cup \lbrace \lambda \in \mathbb{C}: \lambda \notin p(\partial \mathbb{D}) \rm~\text{and}~ $$\lambda \in \sigma_p(\tilde{T_p}) \rbrace$.
				\end{center} By \cite[Theorem 3.4]{wang}, we have $\lbrace \lambda \in \mathbb{C}: \lambda \notin p(\partial \mathbb{D}) \rm~\text{and}$$ ~\lambda \in \sigma_p(\tilde{T_p}) \rbrace=\emptyset$. Thus, $\tilde{T_p}$ satisfies property $(V_E)$. Consequently,  $\tilde{T_p}$ satisfies property $(UW_E)$. It follows from \cite[Example 3.5]{wang} that $\tilde{T_p}$ is not hyponormal.
	\end{example}
	 \textbf{Conflict of interest:} The authors state no conflict of interest.\\

\textbf{Data availability:} No data was used for the research described in this paper.\\

\textbf{Acknowledgement:} The research of first author is supported by senior research fellowship of university grants commission, India.

	\end{document}